\providecommand{\Fourier}{\mathcal{F}}
\renewcommand{\phi}{\varphi}
\DeclareMathOperator{\Log}{Log}
\newcommand{\PP}{\mathbb{P}}
\newcommand{\QQ}{\mathbb{Q}}
\newcommand{\E}{\mathbb{E}}
\newcommand{\R}{\mathbb{R}}
\newcommand{\N}{\mathbb{N}}
\newtheorem{theorem}{Theorem}
\newtheorem{lemma}{Lemma}
\newtheorem{corollary}{Corollary}
\newtheorem{proposition}{Proposition}
\renewcommand{\phi}{\varphi}
\renewcommand{\Log}{\textnormal{Log\,} }
\renewcommand{\d}{\, \textnormal{d}  }
\providecommand{\ebig}[1]{\mathbb{E}\big[ #1  \big]  }
\providecommand{\Ebig}[1]{\mathbb{E}\Big[ #1  \Big]  }
\providecommand{\m}{\textnormal{-} }
\providecommand{\intu}{\int_0^{u} }
\renewcommand{\tilde}{\widetilde}
\providecommand{\intu}{\int_0^u }
\DeclareMathOperator{\lk}{L}
\renewcommand{\hat}{\widehat}
\renewcommand{\Re}{\textnormal{Re}\, }
\renewcommand{\Im}{\textnormal{Im}\,  }
\providecommand{\ephi}{\hat{\phi} }
\providecommand{\dephi}{\hat{\phi}' }
\providecommand{\dphi}{{\phi}' }
\providecommand{\eps}{\varepsilon}
\providecommand{\intm}{\int_{- m}^{ m} }
\DeclareMathOperator{\KL}{KL}
\begin{document}

\title{Nonparametric adaptive estimation for grouped data}
\author{C. Duval\thanks{$^1$MAP5, UMR CNRS
8145, Universit\'e Paris Descartes.},
J. Kappus\thanks{$^2$Institut f\"{u}r Mathematik, Universit\"{a}t Rostock}
}
\date{}

\maketitle

\begin{abstract}
\noindent The aim of this paper is to estimate the  density $f$ of a random variable $X$ when one has access to independent observations of the sum of $K\geq 2$ independent copies of $X$.  We provide a constructive estimator 
based on a suitable definition of the logarithm of the empirical characteristic function.
We propose a new strategy for the data driven choice of the cut-off parameter.  
The adaptive estimator is proven to be minimax-optimal up to some logarithmic loss. A numerical study illustrates the performances of the method.
  Moreover, we discuss the fact that the definition of the estimator applies in a wider context than the one considered here.
  \end{abstract}
\noindent {\sc {\bf Keywords.}} {\small Convolution. Inverse problem. Nonparametric adaptive estimation. } \\
\noindent {\sc {\bf AMS Classification.}} 62G07, 62G20, 62G05.

\section{Introduction}

\subsection{Model and motivation}

In this article, we consider the problem of nonparametric density estimation from  grouped data observations.  The model can be described as follows. Let $X$ be some real-valued random variable which has a square integrable Lebesgue density $f$. We aim at estimating $f$, but we only have access to the aggregated  data
\begin{align}\label{eq data}
Y_j=\sum_{k=1}^{K}X_{j,k},\ \ \ j=1,\ldots,n.
\end{align}
The $X_{j,k},\ k=1,\ldots,K; \,  j=1,\ldots,n$ are independent copies of $X$  and $K$ is a known positive integer.

 This statistical framework is closely related to several other models. First,  we have a connection to decompounding problems (see {e.g.} Buchmann and Gr\"ubel (2003) or van Es {et al.} (2007)), where the jump distribution is estimated from a discretely observed trajectory. In that latter example the law of each nonzero increment is an aggregation of several jumps.

 Second, for $K\geq 2$, \eqref{eq data} describes a particular deconvolution problem where the target density is $f$ and the error density is $f^{\ast K-1}$, which is unknown and depends on $f$ itself. The classical deconvolution model, where the error distribution and the target distribution are not related, has been extensively studied in the literature. Optimal rates of convergence and adaptive procedures are well known. In the case where the density of the noise is known  we can mention  Carroll and Hall (1988), Stefanski (1990), Stefanski and Carroll (1990), Fan (1991), Butucea (2004), Butucea and Tsybakov (2008a,b), Pensky and Vidakovic (1999) or Comte et al. (2007) for $L_{2}$ loss functions or Lounici and Nickl (2011) for the $L_{\infty}$ loss. Deconvolution with unknown error distribution has also been studied (see e.g. Neumann (1997), Delaigle {et al.} (2008), Johannes (2009) or Meister (2009), if an additional error sample is available, or Comte and Lacour (2011), Delattre {et al.} (2012), Johannes and Schwarz (2013), Kappus and Mabon (2014) or Comte and Kappus (2015) under other sets of assumptions). 
 
In the aforementioned literature, it is always assumed that the distribution of the noise is known or can be estimated from an additional sample. In the present framework, one has to exploit the particular structure of the observations and the dependence between the noise and the target density, in order to identify the latter one.  Surprisingly, we will show that the optimal rates of convergence in the grouped data setting with skewed density are, in some cases, slower than the optimal rates which are usually encountered in deconvolution problems (see Theorem \ref{prop_2} below).

\

 Recently, Delaigle and Zhou (2015) studied the nonparametric estimation of the probability of contamination given a covariate $X$, when only aggregated observations of this covariate are available. From an applied point of view, grouped data are encountered in many fields, especially related to the study of infection diseases. For example, aggregated data are encountered when the cost of measurement is too high to allow individual measurements. Moreover,  to ensure confidentiality of the participants in a study, measurements of different individuals can be grouped.  Sometimes, when $X$ represents the measurement of a substance that cannot be detected below a certain detection limit, observations are grouped in order to make the detection possible (see the references given in Delaigle and Zhou (2015)).

\subsection{Main results of the paper}

We have at our disposal $n$  independent copies  of  the random variable $Y$, which has the Lebesgue density $f^{\ast K}$, where $\ast$ denotes the convolution product. This implies that $\phi_{Y}(u)= \phi_X(u)^{K}$, where $\phi_{X}$ and $\phi_{Y}$ denote the characteristic functions of $X$ and $Y$ respectively. Consequently, $f$ can be recovered, taking the $K$-th root of the characteristic function of $\phi_{Y}$  and applying the Fourier inversion formula. A first estimator based on this strategy is proposed in Linton and Whang (2002) to estimate a regression function when only aggregated observations are available. They studied the asymptotic pointwise behavior of their estimator. \\

If $\phi_{X}$ has no imaginary part and is nonnegative, the $K$-th root is uniquely defined and an estimator of $\phi_{X}$ can hence be constructed, replacing $\phi_Y$ by the absolute value of its  empirical counterpart and taking the $K$-th root. An estimator of $f$ is then obtained applying a Fourier inversion formula. Meister (2007) studied this case and provided the associated minimax upper and lower bounds.  Moreover, a cross-validation bandwidth selector is proposed in that paper.  However, theoretical optimality properties for the adaptive procedure have only been  shown under quite restrictive assumptions on the characteristic function. \\
Meister (2007) also proposed a generalization of the  estimation procedure to the skewed case and  a modified version of this estimator is used by Delaigle and Zhou (2015).  However, this estimator has the disadvantage that it is not given in a closed form, making the practical calculation difficult to handle. 

\

In the skewed case,  the estimation procedure suffers from the fact that the definition of $K$-th root is ambiguous. In the present  paper we provide a definition of the $K$-th root, based on a suitable definition of the logarithm (see Lemma \ref{lem Log} hereafter), of a characteristic function that is not necessarily real. A closed form procedure to estimate $f$ naturally follows. For this estimator, we provide non asymptotic risk bounds and derive rates of convergence over certain nonparametric function classes.

It is interesting to note that the smoothness  classes considered in Meister (2007),  which differ from usual Sobolev and H\"older classes, are not very general, since the characteristic functions are bounded from above and below by functions which are of the same order.   In the present work, we provide rates of convergence over more general function classes.  In the particular cases which have been considered in Meister (2007), our rates of convergence agree with the ones found in that paper.  

However, surprisingly, in the general case our rates of convergence are different from the rates which are known to be optimal in a classical deconvolution framework, where the noise density is known or can be estimated from an additional sample.  We prove that these unusual rates are optimal in the grouped data framework (see Theorem \ref{prop_2}). This reveals that the grouped data setting has its own specificities and differ from the classical deconvolution framework. To the best of our knowledge, this phenomenon is completely new in the literature on deconvolution.

\

Finally, we propose a new data driven selection strategy for the cut-off parameter. We show that the resulting rates of convergence are, up to a logarithmic loss,  adaptive minimax. Our adaptive procedure is computationally simple and extremely fast, in comparison to usual cross-validation techniques. Moreover, it applies in a wider setting than the one proposed in Meister (2007).

\subsection{Organization and some notation}

 The paper is organized as follows. Section \ref{sec res} contains  the construction of the  estimator. 
 Upper and lower risk bounds are established and the corresponding minimax rates are derived.  In Section \ref{sec ad} a data driven bandwidth selector is introduced and it is shown that the resulting rates are adaptive minimax. A discussion is proposed in Section \ref{sec discuss} and a numerical study is given in Section \ref{sec num}. Finally, the proofs are gathered in Section \ref{sec proof}.\\

We close this section by introducing  some notation which will be used throughout the rest of the text.  In the sequel, $\ast$ is understood to be the convolution product of integrable functions and $f^{\ast K}$ denotes the $K$-fold auto-convolution, $f^{\ast K}= f\ast \ldots \ast f$. Given a random variable $Z$, 
$\phi_Z(u)=\E[e^{iu Z} ]$ denotes the characteristic function of $Z$. For $Y_j$ defined as in formula \eqref{eq data}, we will drop the subscript and write  $\phi$ instead of $\phi_{{Y_j}}$. For $f\in \lk^1(\R)$, $\Fourier f(u) =\int 
e^{iux } f(x) \d x$ is understood to be the Fourier transform.  Moreover, we denote by $\| \cdot  \|$ the $\lk^2$-norm of functions, $\| f\|^2:= \int |f(x)|^2 \d x$.  Given some function $f\in \lk^1(\R) \cap \lk^2(\R)$, we denote by $f_m$ the  uniquely defined function with Fourier transform $\Fourier f_m= (\Fourier f)\mathds{1}_{[-m,m]}$.

\section{Risk bounds and rates of convergence \label{sec res}}

\subsection{Construction of the estimator}
In the sequel, we will always work under the assumption 
\begin{align}\forall u\in \R, \quad \phi_{X}(u)\ne 0.\tag{A0}\label{eq A0}
\end{align}
This  assumption is standard  in deconvolution problems and, for  the present model,  proven necessary in Meister (2007). Therein, a counterexample is made explicit. When $K$ is even and \eqref{eq A0} is not satisfied, it is possible to build two  densities $f_{0}$ and $f_{1}$ which differ on a set of nonzero Lebesgue measure and for which  $f_0^{\ast K} (x)= f_1^{\ast K}(x),\  \forall x\in\R$. Hence it is not  possible to distinguish  $f_{0}$ from $f_{1}$ in presence of the $K$ aggregated observations.

Suppose, we are given $n$  independent copies  of  the random variable $Y=\sum_{k=1}^{K} X_{1,k}$, which has the Lebesgue density $f^{\ast K}$. In the Fourier domain, this implies $\phi(u)= \phi_X(u)^{K}$. Consequently, $f$ can be recovered, taking the $K$-th root of the characteristic function and using the fact that, by the Fourier inversion formula, 
\begin{align*}
f(x)=\frac1{2\pi}\int_{ \R}e^{-iux}\big(\phi(u)\big)^{1/K}\d u.
\end{align*}
This formula makes immediate sense when $\phi$  has no imaginary part and is strictly positive. Contrarily, if $f$ is non symmetric and hence $\phi$ has a nonzero imaginary part, the definition of  $ z \mapsto z^{1/K}$ is not unique. In order to get a proper definition of the $K$-th root, one needs to specify which branch of the logarithm is considered.  The algorithm  discussed in Meister (2007) for the non symmetric case is not given in a closed form so the estimator is numerically difficult to handle.

In order to get a closed-form estimator  let us recall the concept of the distinguished logarithm. 
\begin{lemma} \label{lem distlog}Let $\phi$ be a characteristic function which has no zeros. Then there exists a unique continuous function $\psi$ which satisfies
\begin{enumerate}
\item[1.] $\psi(0)=0$ 
\item[2.]  $\forall u\in \R: \,  \phi(u)= e^{\psi(u)}$.   
\end{enumerate}
\end{lemma}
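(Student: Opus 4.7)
The plan is to view this as a lifting problem: $\phi$ is a continuous map from $\R$ into $\C\setminus\{0\}$, and I want to lift it through the covering map $\exp\colon \C \to \C\setminus\{0\}$. Since $\R$ is connected and simply connected, such a lift exists and is unique once its value at a single point is prescribed. The uniqueness part is short, so I would dispatch it first; existence will require an explicit construction, since $\phi$ is only assumed continuous, not differentiable.

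For uniqueness, suppose $\psi_1$ and $\psi_2$ are two candidates. Then $e^{\psi_1(u)-\psi_2(u)}=1$ for every $u$, so $\psi_1-\psi_2$ takes values in the discrete set $2\pi i\Z$. A continuous map from the connected space $\R$ into a discrete subset of $\C$ is constant, and the constraint $\psi_1(0)=\psi_2(0)=0$ forces this constant to vanish.

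For existence, I would construct $\psi$ by following $\phi$ along $[0,u]$ with the principal branch of the logarithm. Fix $u>0$ (the case $u<0$ is symmetric). By continuity and non-vanishing of $\phi$ we have $c := \inf_{v\in[0,u]}|\phi(v)|>0$, and by uniform continuity of $\phi$ on $[0,u]$ one can choose a partition $0=u_0<u_1<\cdots<u_N=u$ fine enough that $|\phi(v)-\phi(u_k)|<|\phi(u_k)|$ whenever $v\in[u_k,u_{k+1}]$. On each such subinterval the ratio $\phi(v)/\phi(u_k)$ then lies in the open disk of radius $1$ centered at $1$, a set on which the principal logarithm $\log_{\mathrm{pr}}$ is well defined and continuous. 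Set
\[
\psi(u) := \sum_{k=0}^{N-1}\log_{\mathrm{pr}}\bigl(\phi(u_{k+1})/\phi(u_k)\bigr).
\]
Exponentiating telescopes to $\phi(u)/\phi(0)=\phi(u)$, so the required identity $\phi(u)=e^{\psi(u)}$ holds.

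The main technical point, and the only real obstacle, is verifying that $\psi(u)$ does not depend on the chosen partition: any two admissible partitions have a common refinement, so it is enough to check that inserting a single additional node does not change the sum. This reduces to the additivity identity $\log_{\mathrm{pr}}(ab)=\log_{\mathrm{pr}}(a)+\log_{\mathrm{pr}}(b)$, valid whenever $a$, $b$, and $ab$ all lie in the right half-plane $\{\Re z > 0\}$, which is exactly the situation produced by our choice of partition. Once independence of the partition is established, continuity of $u \mapsto \psi(u)$ follows by perturbing the last node, and $\psi(0)=0$ is automatic; the case $u<0$ is handled symmetrically, yielding the desired $\psi$ on all of $\R$.
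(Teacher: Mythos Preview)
The paper does not actually prove this lemma; immediately after stating Lemmas~\ref{lem distlog} and~\ref{lem droot} it writes that their proofs ``can be found, for example, in Sato (1999) (Lemma~7.6).'' Your proposal therefore goes beyond the paper by supplying a self-contained argument, and the covering-space strategy you outline is the standard and correct one.

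One small wrinkle in your refinement step deserves tightening. You invoke the identity $\log_{\mathrm{pr}}(ab)=\log_{\mathrm{pr}}(a)+\log_{\mathrm{pr}}(b)$ under the hypothesis that $a$, $b$, and $ab$ all lie in the right half-plane, claiming this is ``exactly the situation produced by our choice of partition.'' But your mesh condition $|\phi(v)-\phi(u_k)|<|\phi(u_k)|$ for $v\in[u_k,u_{k+1}]$ places $a=\phi(w)/\phi(u_k)$ and $ab=\phi(u_{k+1})/\phi(u_k)$ in the disk $|z-1|<1$, yet says nothing directly about $b=\phi(u_{k+1})/\phi(w)$. The repair is easy: either note that the additivity already holds once $a$ and $ab$ lie in that disk (since then $\arg a,\arg(ab)\in(-\pi/2,\pi/2)$ forces $\arg b=\arg(ab)-\arg a$ without a $2\pi$ correction), or strengthen the mesh to $|\phi(v)-\phi(v')|<c$ for all $v,v'$ in the same subinterval, with $c=\min_{[0,u]}|\phi|$, so that every refinement is automatically admissible. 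With either adjustment your construction goes through cleanly.
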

 The function $\psi$  introduced in Lemma \ref{lem distlog} is called the \emph{distinguished logarithm} of $\phi$ and, hereafter, denoted by $\Log \phi$. Moreover,  in the sequel, we let $1/K$ denote the 
 \emph{distinguished $K$-th} root, that is, $\phi^{1/K}:= \exp( ( \Log \phi)/K  )$.  It is important to keep in mind that the distinguished logarithm is usually not equal to the main branch of the logarithm. Moreover, the definition of a distinguished logarithm, and consequently of a distinguished root, only makes sense with respect to $\phi$ seen as a function rather than pointwise. 

\begin{lemma}\label{lem droot} For some integer $K$, let  $\phi(u)=\phi_X(u)^K$. Then, both characteristic functions are related as follows, 
\begin{align*}
\forall u\in \R:   \phi_X(u) = \exp(   (\Log \phi (u) )/K )=: \phi(u)^{\frac{1}{K}}. 
\end{align*}
\end{lemma}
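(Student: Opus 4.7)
The plan is to deduce the result from the uniqueness clause of Lemma \ref{lem distlog}. Since we work under assumption \eqref{eq A0}, the characteristic function $\phi_X$ has no zeros on $\R$, so $\phi=\phi_X^K$ also has no zeros; hence both $\Log\phi_X$ and $\Log\phi$ are well defined as distinguished logarithms in the sense of Lemma \ref{lem distlog}.

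The key step is to exhibit a continuous function that has the two defining properties of $\Log\phi$ and is also equal to $K\cdot\Log\phi_X$. Concretely, I would set $\psi(u):=K\,\Log\phi_X(u)$ and verify:
\begin{enumerate}
\item[(i)] $\psi(0)=K\cdot 0=0$;
\item[(ii)] $\psi$ is continuous on $\R$, as a scalar multiple of a continuous function;
\item[(iii)] $e^{\psi(u)}=\exp\bigl(K\Log\phi_X(u)\bigr)=\bigl(e^{\Log\phi_X(u)}\bigr)^K=\phi_X(u)^K=\phi(u)$.
\end{enumerate}
By the uniqueness assertion of Lemma \ref{lem distlog} applied to $\phi$, this forces $\psi=\Log\phi$, i.e.\ $K\,\Log\phi_X=\Log\phi$, and dividing by $K$ and exponentiating yields
\[
\phi_X(u)=\exp\bigl(\Log\phi_X(u)\bigr)=\exp\bigl((\Log\phi(u))/K\bigr),
\]
which is the claimed identity (and, per the paper's convention, is the definition of $\phi(u)^{1/K}$).

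The only subtle point is verifying property (iii): one must make sure that the identity $e^{Kz}=(e^z)^K$ is being used in the unambiguous direction (integer power of a complex exponential), which is automatic since $K\in\N$. Everything else is a direct appeal to the uniqueness of the distinguished logarithm, so there is no real obstacle; the content of the lemma is essentially a bookkeeping statement that the distinguished logarithm respects integer-power relationships between zero-free characteristic functions.
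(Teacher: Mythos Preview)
Your argument is correct and is exactly the standard proof: the paper does not spell out its own argument but defers to Sato (1999), Lemma 7.6, whose proof proceeds precisely by checking that $K\,\Log\phi_X$ is continuous, vanishes at $0$, and exponentiates to $\phi$, then invoking the uniqueness part of Lemma~\ref{lem distlog}. There is nothing to add.
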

The proof of Lemma \ref{lem distlog} and Lemma \ref{lem droot} can be found, for example, in Sato  (1999) (Lemma 7.6). 

In order to obtain  a constructive estimator for $\phi_X$, we give the following result which makes the definition of the distinguished logarithm explicit.  

\begin{lemma}\label{lem Log} Let $\phi$ be a  characteristic function without zeroes and assume that $\phi$ is differentiable.  Then, it follows that 
\begin{equation}  \label{Formel_DL}
\Log (\phi(u) ) = \int_0^u  \frac{\phi'(z)}{\phi(z) } \d z.
\end{equation}
\end{lemma}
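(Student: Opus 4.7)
The plan is to verify directly that the right-hand side of \eqref{Formel_DL} satisfies the two defining properties of the distinguished logarithm given in Lemma \ref{lem distlog}, and then invoke uniqueness.

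First, I would set $g(u) := \int_0^u \phi'(z)/\phi(z)\, dz$. Since $\phi$ is a continuous, nowhere-vanishing function and $\phi'$ exists (and, being the derivative of a characteristic function, is at least locally integrable; one typically tacitly assumes continuity of $\phi'$, which holds in our setting because $\phi=\phi_X^K$ with the relevant smoothness), the integrand $\phi'/\phi$ is locally integrable, so $g$ is a well-defined continuous function on $\R$. Trivially $g(0)=0$, so property 1 of Lemma \ref{lem distlog} is satisfied.

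The core of the argument is to verify property 2, namely $e^{g(u)}=\phi(u)$. For this I would consider the auxiliary function $h(u):=\phi(u)e^{-g(u)}$. Since $g$ is differentiable with $g'(u)=\phi'(u)/\phi(u)$, a direct differentiation gives
\begin{equation*}
h'(u)=\phi'(u)e^{-g(u)}-\phi(u)\cdot \frac{\phi'(u)}{\phi(u)}e^{-g(u)}=0.
\end{equation*}
Thus $h$ is constant on $\R$, and since $h(0)=\phi(0)\cdot e^{0}=1$, we obtain $\phi(u)e^{-g(u)}=1$, i.e.\ $e^{g(u)}=\phi(u)$ for every $u\in\R$.

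Having shown that $g$ is a continuous function satisfying $g(0)=0$ and $e^{g(u)}=\phi(u)$, the uniqueness part of Lemma \ref{lem distlog} forces $g=\Log\phi$, which is exactly \eqref{Formel_DL}. The only subtle point is the regularity needed to form $\phi'/\phi$ and to justify the differentiation of $h$; once continuity (or at least local integrability) of $\phi'$ is in place this is straightforward, and non-vanishing of $\phi$ by assumption removes the usual obstruction. There is no branch-cut issue precisely because we are not computing $\log$ pointwise but rather integrating a well-defined continuous vector field, which is the conceptual reason the distinguished logarithm exists in the first place.
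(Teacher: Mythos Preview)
The paper does not actually prove Lemma~\ref{lem Log}; it is stated without proof and then used to motivate the estimator. Your argument is correct and is the standard route: define $g(u)=\int_0^u \phi'/\phi$, check $g(0)=0$, show $h(u)=\phi(u)e^{-g(u)}$ has vanishing derivative and hence equals $1$, and conclude by the uniqueness in Lemma~\ref{lem distlog}. The only point that deserves care is the one you flag yourself: the statement assumes merely that $\phi$ is differentiable, whereas your computation of $h'$ uses that $g$ is differentiable with $g'=\phi'/\phi$, which in turn needs $\phi'$ to be continuous (or at least that the fundamental theorem of calculus applies). In the paper's setting $\E[X^2]<\infty$, so $\phi'$ is continuous and this is harmless; it would be cleanest to either add continuity of $\phi'$ to the hypotheses or to remark explicitly that this holds under the standing moment assumption.
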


The preceding lemmas suggest to  exploit formula \eqref{Formel_DL} in order to build  a constructive estimator of the distinguished logarithm and hence of the characteristic function itself.  The characteristic function of $Y$ and its derivative are replaced by their empirical counterparts,  
$$\hat\phi(u):=\frac1n\sum_{j=1}^{n}e^{iuY_{j}}   \quad \text{and}  \quad \hat{\phi}'(u)= \frac{1}{n}\sum_{j=1}^{n}  iY_j e^{iu Y_j}. $$ 

Moreover, we define the quantities 
\[
 \psi'(u) :=\frac{\phi'(u)}{\phi(u) }\quad\mbox{and}\quad\hat{\psi}'(x):=\frac{\ephi'(u) }{\ephi(u) }.
\] 
as well as 
\[
\psi(u) := \intu \psi'(x) \d x \quad\mbox{and}\quad  \hat{\psi}(u):= \intu \hat{\psi}'(x)\d x. 
\]
It follows from Lemma \ref{lem Log} that   $\psi(u) =\Log (\phi(u) )$ and $\hat{\psi}(u) =\Log (\hat{\phi}(u) )$. Lemma \ref{lem droot} suggests to use 
\begin{align*}
\hat{\phi}_X(u):=\exp(  \hat{\psi}(u) /K )
\end{align*} 
as an estimator of  $\phi_X$. Finally, we  use a spectral cut-off and  apply Fourier inversion to  derive, for arbitrary $m>0$,  the following 
estimator of $f$, 
\begin{align}
\hat f_{m}(x):=\frac{1}{2\pi}\int_{-m}^{ m}e^{-iux}\hat\phi_{X}(u)\d u.\label{eq def est}
\end{align}

It is worth pointing out that for real-valued and strictly positive characteristic functions, our estimator coincides with the estimator given in Meister (2007). In this case, the distinguished $K$-th root equals the usual $K$-th root. 
This estimator resembles one of the estimators defined in Comte et al. (2015), who consider the problem of estimating the jump density for mixed compound Poisson processes. However, in Comte et al. (2015) the associated upper bound is sub optimal and an adaptive bandwidth selector is not proposed. 
\subsection{Non asymptotic risk bounds} 
For  $\eps, \gamma>0$, define 
\begin{align}\label{eq uek}
u_n^{ (\gamma,\eps)}:= \min\{u\geq 0: |\phi(u)|=(1+\eps) \gamma( n/\log n)^{-1/2}\}. 
 \end{align}

 The following bound is valid for the mean integrated squared error of $\hat{f}_m$. 

\begin{theorem}  \label{Hauptsatz_Schranke} Assume that $X$ has a finite second moment. Let $\gamma:=\sqrt{1+ \frac{2}{K}+\delta}$, for some  $ \delta>0$.
Then, for any  $m\geq 1$, 
\begin{equation} \label{equation_upper}
\E[\| f- \hat{f}_m\|^2]  \leq \| f -f_m\|^2  +\frac{C_1}{nK^2}  \int_{-m }^{m } \limits  \frac{1}{|\phi_X(u)|^{2(K-1)}}  \d u+ C_2 m  n^{-\frac{1}{K}} + \frac{2}{\pi} \hspace*{-0.5cm}\int_{ \{ |u| \in U_{n}^{\gamma,\eps} \} }\limits \hspace*{-0.3cm} |\phi_X(u) |^2 \d u,
\end{equation}
where $U_{n}^{\gamma,\eps}=[u_{n}^{(\gamma,\epsilon)} ,m]$, with the convention that if $m\leq u_{n}^{(\gamma,\epsilon)}$ then $U_{n}^{\gamma,\eps}=\emptyset$, and where $C_1=(1+\eps)\log(1+1/\eps)$ and  $C_2=(9+4\|\phi_{X}\|^{2})$.
\end{theorem}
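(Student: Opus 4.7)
The plan is to reduce to a pointwise analysis via Plancherel, then decompose using a good-event / bad-event split and exploit the multiplicative identity $\hat{\phi}_X - \phi_X = \phi_X(e^{(\hat{\psi}-\psi)/K} - 1)$ on the good event.

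First, Plancherel gives
$$\E\|f-\hat{f}_m\|^2 = \|f-f_m\|^2 + \frac{1}{2\pi}\int_{-m}^m \E|\hat{\phi}_X(u) - \phi_X(u)|^2 \d u,$$
matching the first term. For each $u$, I would introduce the event
$$\Omega_u := \Bigl\{\sup_{z\in[0,|u|]} |\hat{\phi}(z) - \phi(z)| \leq \eps \inf_{z\in[0,|u|]}|\phi(z)|\Bigr\},$$
which simultaneously ensures that $\hat{\phi}$ stays bounded away from zero on $[0,|u|]$ (so that $\hat{\psi} = \Log \hat{\phi}$ is well-defined by Lemma~\ref{lem Log}) and that the ratio $\hat{\phi}/\phi$ remains in a neighborhood of $1$.

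On $\Omega_u$, use the identity from Lemma~\ref{lem droot} together with $|e^z - 1| \leq |z|e^{|z|}$ and the Taylor expansion $\Log(1+w) = w + O(|w|^2)$ applied to $w = \hat{\phi}(u)/\phi(u) - 1$. Squaring and using $\E|\hat{\phi}(u)-\phi(u)|^2 \leq 1/n$ together with $|\phi_X(u)/\phi(u)|^2 = |\phi_X(u)|^{-2(K-1)}$ then gives
$$\E[|\hat{\phi}_X(u)-\phi_X(u)|^2 \mathds{1}_{\Omega_u}] \leq \frac{C_1}{nK^2|\phi_X(u)|^{2(K-1)}},$$
where $C_1 = (1+\eps)\log(1+1/\eps)$ absorbs the logarithmic remainder; integrating over $[-m,m]$ recovers the second term.

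On $\Omega_u^c$, split further into good frequencies $|u| \leq u_n^{(\gamma,\eps)}$ and bad frequencies $|u| \in U_n^{\gamma,\eps}$. For good frequencies, apply Hoeffding's inequality to $\hat{\phi}(z) - \phi(z)$: the choice $\gamma^2 = 1+2/K+\delta > 2/K$ produces $\mathbb{P}(\Omega_u^c) \lesssim n^{-1/K}$ after upgrading pointwise concentration to uniform control via a chaining or Lipschitz-covering argument (using $\E Y^2 < \infty$ to bound the random Lipschitz constant $n^{-1}\sum|Y_j|$). Combined with the crude bound $|\hat{\phi}_X - \phi_X|^2 \leq 4$, this contributes the $m\,n^{-1/K}$ piece of the third term. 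For bad frequencies, Jensen's inequality applied to the concave map $x\mapsto x^{1/K}$ gives
$$\E|\hat{\phi}_X(u)|^2 = \E|\hat{\phi}(u)|^{2/K} \leq (\E|\hat{\phi}(u)|^2)^{1/K} \leq (n^{-1}+|\phi(u)|^2)^{1/K} \leq n^{-1/K} + |\phi_X(u)|^2,$$
and combining with $|\hat{\phi}_X - \phi_X|^2 \leq 2(|\hat{\phi}_X|^2 + |\phi_X|^2)$ yields the fourth term plus the remaining piece of the third.

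The main obstacle will be the uniform-in-$z$ control of $\hat{\phi}$ on the segment $[0,|u|]$: since $\hat{\psi}(u)$ depends on the entire trajectory of $\hat{\phi}$ via the integral formula of Lemma~\ref{lem Log}, pointwise concentration at a single $z$ is not sufficient to control $\E|\hat{\phi}_X(u) - \phi_X(u)|^2$ on $\Omega_u^c$. This forces a chaining or maximal-inequality argument which loses only a logarithmic factor, precisely matching the $\log n$ appearing in the definition of $u_n^{(\gamma,\eps)}$, so that all constants can be collected into the stated $C_1$ and $C_2$.
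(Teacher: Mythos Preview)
Your overall architecture is right and closely parallels the paper: Plancherel, a good-event/bad-event split based on uniform control of $\hat\phi$, Hoeffding plus a covering argument for the uniform bound, and a Jensen/H\"older argument on the region $|u|>u_n^{(\gamma,\eps)}$. Your multiplicative identity $\hat\phi_X-\phi_X=\phi_X(e^{(\hat\psi-\psi)/K}-1)$ together with $\hat\psi(u)-\psi(u)=\log(\hat\phi(u)/\phi(u))$ (principal branch, valid on $\Omega_u$) is essentially equivalent to the paper's Neumann series computation and yields the $C_1/(nK^2)\int|\phi_X|^{-2(K-1)}$ term correctly.

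There is, however, a genuine gap in your treatment of $\Omega_u^c$ for good frequencies. The chaining bound does \emph{not} give $\PP(\Omega_u^c)\lesssim n^{-1/K}$: the covering number over $[0,|u|]$ contributes a factor $|u|$ (or $m$), so one only gets $\PP(\Omega_u^c)\lesssim |u|\,n^{-1/K}$ (this is exactly the content of the paper's Lemma~\ref{letztes_Hilfslemma}, and the condition is $\gamma^2>1+2/K$, not $\gamma^2>2/K$). Combined with your crude bound $|\hat\phi_X-\phi_X|^2\leq 4$, integrating over $|u|\leq m$ produces $m^2 n^{-1/K}$ rather than the claimed $m\,n^{-1/K}$. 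The paper avoids this by a different decomposition: it writes
\[
|\hat\phi_X-\phi_X|^2 \leq 2\big||\hat\phi|^{1/K}-|\phi|^{1/K}\big|^2 + 2|\phi_X|^2\big|e^{\frac{i}{K}\Im\hat\psi}-e^{\frac{i}{K}\Im\psi}\big|^2.
\]
The modulus piece is bounded \emph{unconditionally} by $|\hat\phi-\phi|^{2/K}$, which integrates to $m\,n^{-1/K}$ without any event splitting. The phase piece carries the weight $|\phi_X|^2$, so on the single bad event $B_\kappa(m)^c$ one multiplies $\PP(B_\kappa(m)^c)\leq Cm\,n^{-1/K}$ by $\int_{-m}^{m}|\phi_X|^2\leq\|\phi_X\|^2$, recovering $m\,n^{-1/K}$. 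Your multiplicative identity loses precisely this $|\phi_X|^2$ weight on the complement, which is what makes the difference between $m$ and $m^2$. To repair your argument you must either adopt the modulus/phase split on the bad event, or otherwise arrange for an integrable-in-$u$ bound there rather than the constant~$4$.
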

\  \\[-0.2cm]
 The first two terms appearing in the upper risk bound illustrate the structural similarity between this grouped data setting and a classical density deconvolution framework.  The density $f$ of $X$  plays the role of the density of interest and $f^{\ast (K-1)} $  resembles the density of the error term. The third summand corresponds to the usual  variance term for density estimation from grouped data when the density is symmetric (see Meister (2007)). Finally, the fourth summand depends on the quantity $u_{n}^{(\gamma,\epsilon)}$ which appears naturally. We prove that on the interval $[- u_{n}^{(\gamma,\epsilon)},u_{n}^{(\gamma,\epsilon)}]$, the estimator $\hat{\phi}$ can be controlled uniformly. This allows to control the deviation of the empirical distinguished logarithm and consequently the deviation 
of $ \hat{\phi}_X$ from~$\phi_X$.   \\

It follows directly  from the proof of Theorem \ref{Hauptsatz_Schranke} (see Section \ref{sec prf1}) that in the symmetric case the upper bound simplifies, for any $m>0$, to 
 $$\E[\| f- \hat{f}_m\|^2]  \leq \| f -f_m\|^2  + C m  n^{-\frac{1}{K}}.$$
The second and fourth term appearing on the right hand side of formula \eqref{equation_upper} are particular to the case where the density is skewed. Balancing those four terms, there will occur  a logarithmic loss, compared to the symmetric case. This phenomenon is  a consequence of the fact that the estimation problem in the non-symmetric case involves the distinguished logarithm, which requires uniform rather than pointwise control of the empirical characteristic function. Controlling suprema typically leads to the loss of a logarithmic factor. However, we do not have a lower bound result that establishes whether this logarithmic loss is avoidable or not.

\subsection{Rates of convergence \label{sec rate}}
Let us investigate the rates of convergence over certain nonparametric classes of densities. In the sequel, for $b\geq \beta >\frac{1}{2}$,  we denote by $\mathcal{F}(\beta, b,  C, C', C_X, C_f)$ the class of densities for which the following holds
\begin{align}\label{eq F}
\forall u \in \R: \,   (1+C|u|)^{-b}  \leq |\phi_{X}(u) |  \leq (1+C'|u|)^{-\beta}
\end{align} 
and, in addition,
\begin{equation}  \label{Konst_Schranke}
\E[ X^{2}] \leq C_X   \quad  \text{and}   \quad \|f\|\leq C_f. 
 \end{equation}
For $\rho>0$, $\beta>\frac{1}{2}$, $b\in \R$ and $c>0$,  let $\mathcal{H}(\beta, \rho, b,c,  C, C', C_X, C_f)$ be the class of densities for which \eqref{Konst_Schranke} holds and, in addition, 
\begin{align*}
\forall u \in \R: \,   (1+C|u|)^{-b} \exp(-c|u|^{\rho} ) \leq |\phi_{X}(u) |  \leq (1+C'|u|)^{-\beta}.
\end{align*}

Finally, we denote by   $\mathcal{G}(\rho, c, C,C',C_X,C_f) $ the class of densities for which  \eqref{Konst_Schranke} holds and, in addition,
\begin{align}\label{eq G}
\forall u\in \R:   (1+C |u|)^{(\rho-1)/2 } \exp( - c |u|^{\rho}  ) \leq |\phi_{X}(u)|  \leq (1+C' |u|)^{(\rho-1)/2} \exp( - c |u|^{\rho}  ) .
\end{align}

In what follows,  for the sake of readability, we drop the dependence on $C, C', C_X$ and $C_f$ when there  is no risk of confusion. We write, for example $\mathcal{F}(\beta, b)$ instead of $\mathcal{F}(\beta, b,  C, C', C_X, C_f)$.
 As it is usually employed in the  literature, we say that a density is ``ordinary smooth'' if its characteristic function decays polynomially as in \eqref{eq F} and that it is ``super smooth'' if its characteristic function decays exponentially as in \eqref{eq G}.

\

The  classes of functions introduced above differ from the Sobolev or H\"older classes which are usually considered in density estimation. These types of function classes are characterized by some bound from above on the decay of the characteristic function. But contrary to the present model,  bounds from below are not imposed. 
 However, the statistical model under consideration is a special case of a deconvolution problem, with target density $f$ and noise density $f^{\ast (K-1)}$. In a deconvolution framework,  rates of convergence over nonparametric classes of target densities can only be established under additional assumptions on the decay of the characteristic function of the noise. This explains why we consider classes of functions that provide a control from above and below of the characteristic function. It permits to render explicit the rates of convergence, and particular to be able to handle the second term in \eqref{equation_upper} and the quantity $u_{n}^{\gamma,\eps}$ defined in \eqref{eq uek}.

\

The following result is an immediate consequence of Theorem \ref{Hauptsatz_Schranke}.
\begin{proposition}  \label{prop}
For  $a>0$, set  
\[
u_n^{(a)} := \min\{u \geq 0:  |\phi(u) |  = a n^{-\frac{1}{2} }  \}.
\]
Fix $a,\eps, \delta >0$ and let $\gamma$ be defined as in Theorem \ref{Hauptsatz_Schranke}. For every $n\in \N$, let $m^*\in [u_{n}^{(\gamma,\epsilon)}, u_n^{(a)}]$. Then, we have 

\begin{enumerate}\item[(i)]  $
\underset{{ f\in \mathcal{F}(\beta, b)  }}{\sup}  \E[ \|f - \hat{f}_{m^*}\|^2  ] =  O \Big( (n/\log n) ^{-\frac{(2\beta - 1)}{2 K b}  } \Big). 
$
\item[(ii)] $ \underset{{ f\in \mathcal{H}(\beta,\rho,b, c)  }}{\sup}  \E[ \|f - \hat{f}_{m^*}\|^2  ] =  O \Big( (\log n)^{\frac{2\beta - 1}{\rho} } \Big). 
$
\item[(iii)]  $\underset{ f\in \mathcal{G}(\rho,c)}{
\sup}    \E_f[ \|f - \hat{f}_{m^*}\|^2  ] =  O \Big( n^{-\frac{1}{K}  } ( \log n)^{\frac{1}{\rho} }  \Big). $
\end{enumerate}
\end{proposition}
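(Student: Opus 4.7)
The proof is a direct consequence of Theorem \ref{Hauptsatz_Schranke}: substituting $m=m^*$ into the oracle bound \eqref{equation_upper} reduces the problem to controlling each of the four summands on the given class.

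First I would extract two-sided estimates of the thresholds $u_n^{(\gamma,\eps)}$ and $u_n^{(a)}$. Since both are defined by $|\phi_X(u)|^K$ equalling a prescribed level of order $n^{-1/2}$, inverting the class bounds on $|\phi_X|$ yields: in case (i), $u_n^{(\gamma,\eps)}\geq c(n/\log n)^{1/(2bK)}$ and $u_n^{(a)}\leq c' n^{1/(2\beta K)}$; in case (ii), the super-smooth lower bound forces $u_n^{(\gamma,\eps)}\geq c(\log n)^{1/\rho}$ while the polynomial upper bound leaves $u_n^{(a)}\leq c' n^{1/(2\beta K)}$; in case (iii), both thresholds are of exact order $(\log n/(2cK))^{1/\rho}$.

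Next I would bound the four summands in turn. The bias $\|f-f_{m^*}\|^2=\frac{1}{2\pi}\int_{|u|>m^*}|\phi_X(u)|^2\,du$ is controlled through the class upper bound on $|\phi_X|$ combined with $m^*\geq u_n^{(\gamma,\eps)}$. For the second summand, the key observation is that $m^*\leq u_n^{(a)}$ together with the continuity of $|\phi|$ and $|\phi(0)|=1$ forces $|\phi(u)|\geq a n^{-1/2}$ on $[-m^*,m^*]$, hence $|\phi_X(u)|\geq a^{1/K}n^{-1/(2K)}$; this yields
\[
\frac{C_1}{nK^2}\int_{-m^*}^{m^*}\frac{du}{|\phi_X(u)|^{2(K-1)}}\leq c\,m^*\,n^{-1/K},
\]
of the same order as the third summand $C_2 m^*n^{-1/K}$. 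The fourth summand is estimated like the bias, since on $U_n^{\gamma,\eps}\subset[u_n^{(\gamma,\eps)},\infty)$ the class upper bound dictates the decay of $|\phi_X|^2$.

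Finally I would combine these contributions class by class. In case (i), the bias and the fourth summand dominate and produce $(n/\log n)^{-(2\beta-1)/(2bK)}$, while the two $m^* n^{-1/K}$ terms are of smaller order. In case (ii), inserting $u_n^{(\gamma,\eps)}\geq c(\log n)^{1/\rho}$ into the polynomial bias $(m^*)^{-(2\beta-1)}$ delivers the logarithmic rate. In case (iii), the $m^*n^{-1/K}$ terms dominate with $m^*\asymp(\log n)^{1/\rho}$, yielding $n^{-1/K}(\log n)^{1/\rho}$, while the bias (of order $e^{-2c(m^*)^\rho}\asymp n^{-1/K}$) and the second summand are absorbed. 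The main delicacy is the mixed class $\mathcal{H}$, where the super-smooth lower bound forces a logarithmic threshold $u_n^{(\gamma,\eps)}$ whereas the polynomial upper bound governs the bias; reconciling these two growth types is what produces the unusual logarithmic rate.
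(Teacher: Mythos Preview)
Your proposal is correct and mirrors the paper's proof: both derive the threshold estimates $u_n^{(\gamma,\eps)}$ and $u_n^{(a)}$ from the class bounds on $|\phi_X|$, then bound the four summands of \eqref{equation_upper} separately and identify the dominant one on each class. The only cosmetic difference is that you absorb the second summand into the form $c\,m^*\,n^{-1/K}$ via the uniform lower bound $|\phi_X(u)|\geq a^{1/K}n^{-1/(2K)}$ on $[-m^*,m^*]$, whereas the paper evaluates it directly using $|\phi_X(u_n^{(a)})|^{-2(K-1)}$; the resulting orders coincide.
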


If we consider the class $\mathcal{F}(\beta, \beta)$  (taking $b=\beta$) or the class $\mathcal{G}(\rho, c)$, the rates of convergence summarized in  Proposition 1 coincide with the rates which have been proven to be minimax optimal (up to a logarithmic loss) in Meister (2007).  Moreover, the rates of convergence derived for  $\mathcal{H}(\beta, \rho,b,c)$  coincide with the optimal rates of convergence encountered in a deconvolution setting when the target density is ordinary smooth and the density of the noise is super smooth, see e.g. Fan (1991).
\   \\

The general  class $\mathcal{F}(\beta, b)$, with $b>\beta$, has not been investigated in Meister (2007).  It is unexpected and noteworthy that the rate of convergence derived in Proposition \ref{prop} differs from the optimal rate of convergence which is "standard" in  deconvolution problems. 

Indeed, take the case where a random variable $X$ is observed with an additional additive error $\epsilon$ whose distribution is  known.  If $|\phi_X|$ is bounded from above by a function which decays as $|u|^{-\beta}$ and  $|\phi_\epsilon|$ is bounded from below by a function decaying as $|u|^{-(K-1)b}$, the optimal rate of convergence is $n^{-\frac{2\beta -1}{2\beta +2(K-1)b} }$ (see e.g. Meister  (2009)). If the characteristic function of $\epsilon$ is unknown, but can be estimated from an additional sample of size $n$ of the pure noise, the rate of convergence is the same as in the case of a  known error distribution (see e.g. Johannes~(2009)).

In comparison to this, the rate of convergence derived in  Proposition \ref{prop} equals, up to a logarithmic factor  $n^{\frac{2\beta-1}{2bK} }$, which is slower.
It is shown in the next section that this slower rate of convergence is not due to our estimator, but is, up to the logarithmic factor, optimal in the grouped data framework. 

\

We have not discussed  the case where $|\phi_{X}|$ is bounded from above and below by exponentially decaying functions with different orders. It is possible to derive rates of convergence in this case. However, computations are cumbersome and the resulting rates are, in some particular cases, not available in a closed form (see Lacour (2006)). For this reason, we omit this case. 

\subsection{Minimax lower bound}
The following result shows that, if $\beta>1$, the rates of convergence found for the class $\mathcal{F}(\beta, b)$ are minimax optimal up to the loss of a logarithmic factor. 

\begin{theorem}\label{prop_2}  Assume that $\beta>1$. Then there  exists some positive constant $d$ such that 
\begin{align*}
\liminf_{n\to \infty} \     \inf_{\tilde{f}  }  \  \sup_{f\in \mathcal{F}(\beta, b) }
 \E_f[ \|f - \tilde{f}\|^2  ] n^{\frac{ 2\beta - 1}{2bK}  }  \geq d >0.  
\end{align*}
The infimum is taken over  all estimators $\tilde{f}$ of $f$,  based on the observations $Y_j, \ j=1, \ldots, n$. 
\end{theorem}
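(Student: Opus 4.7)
I will apply Le Cam's two–point testing method: it suffices to exhibit two densities $f_0, f_1 \in \mathcal{F}(\beta, b)$, depending on $n$, satisfying
(a) $\|f_0 - f_1\|^2 \geq d\, n^{-(2\beta-1)/(2bK)}$ for some $d > 0$, and
(b) $n\cdot\KL(f_0^{*K}, f_1^{*K}) \leq c$ for some $c > 0$.
Le Cam's lemma then yields $\inf_{\tilde f}\sup_{f\in\{f_0,f_1\}}\E_f[\|\tilde f - f\|^2] \gtrsim n^{-(2\beta-1)/(2bK)}$, which is the claim.

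\smallskip

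\textbf{Construction.} The critical spectral scale is $U_n := c_0\, n^{1/(2bK)}$: at this frequency $|\phi_X(U_n)|^K \asymp n^{-1/2}$ for any $f \in \mathcal{F}(\beta, b)$ whose characteristic function saturates the lower modulus bound, which is the noise floor of the empirical characteristic function. I take a reference density $f_0$ strictly inside $\mathcal{F}(\beta, b)$ whose characteristic function $\phi_0$ is comparable to $(1+|u|)^{-b}$ on a neighborhood of $|u|=U_n$; this is the adversary's best choice because it minimizes the amplification in the $K$-fold convolution. The hypothesis $\beta > 1$ guarantees $\phi_0\in L^1$, so $f_0$ is a bounded continuous density, leaving room to add a small perturbation. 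Pick $H$ real, even, smooth, with $\hat H$ supported in $[-1/2, 1/2]$ and $\hat H(0)=0$, and set
\[
\hat g_n(u) := \delta_n\bigl[\hat H\bigl(\tfrac{u-U_n}{U_n}\bigr)+\hat H\bigl(\tfrac{u+U_n}{U_n}\bigr)\bigr],\qquad f_1 := f_0 + g_n,
\]
with amplitude $\delta_n \asymp U_n^{-\beta}$, saturating the upper modulus bound on the spectral support of the bump. Standard verifications --- $\int f_1 = 1$ (from $\hat g_n(0) = 0$); $f_1 \geq 0$ for $n$ large (from $\|g_n\|_\infty \lesssim U_n^{1-\beta}\to 0$, using $\beta>1$); and $f_1\in\mathcal{F}(\beta, b)$ (direct check of the modulus inequalities, tuning the phase of $\hat H$ so that $|\phi_0+\hat g_n|$ does not fall below the lower bound) --- show that $f_1$ is an admissible hypothesis. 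By Parseval,
\[
\|f_0 - f_1\|^2 = (2\pi)^{-1}\|\hat g_n\|^2 \asymp \delta_n^2 U_n \asymp U_n^{1-2\beta} = c_0^{1-2\beta}\,n^{-(2\beta-1)/(2bK)},
\]
which is condition (a).

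\smallskip

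\textbf{The main obstacle: the sharp KL estimate.} All the subtlety is in (b). Expanding
\[
\phi_1^K - \phi_0^K \;=\; \hat g_n\sum_{j=0}^{K-1}\phi_0^{K-1-j}\phi_1^j
\]
gives an explicit Fourier-domain expression for $\phi_{Y,1}-\phi_{Y,0}$; on the spectral support of $\hat g_n$ one has $|\phi_0|\asymp U_n^{-b}$ and $|\phi_1|\lesssim U_n^{-\beta}$, so each term in the sum can in principle be bounded. A naïve Parseval estimate $\chi^2\lesssim\|f_{Y,1}-f_{Y,0}\|_2^2$ on this difference yields only a lower bound that is strictly weaker than the stated exponent $(2\beta-1)/(2bK)$: this is because $1/f_{Y,0}\notin L^\infty$ (the density decays at infinity) so the $\chi^2$-divergence is not controlled by the Fourier $L^2$-norm alone. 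The sharp estimate requires a \emph{weighted} Parseval/Plancherel argument, calibrated to the tail decay of $f_{Y,0}=f_0^{*K}$ (inherited from the polynomial decay of $\phi_0$), together with a careful analysis of all the binomial terms in the sum, to extract the correct scaling $\KL(f_{Y,0},f_{Y,1})\lesssim n^{-1}$ under the above calibration of $\delta_n$ and $U_n$. Performing this weighted estimate rigorously --- and confirming that no binomial term spoils the bound --- is the technical heart of the proof, and is the source of the ``grouped-data'' denominator $2bK$ in place of the classical deconvolution denominator $2\beta+2b(K-1)$.
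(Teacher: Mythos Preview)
Your two-point construction cannot deliver the KL bound (b), and the difficulty is not the weight in the $\chi^2$-divergence.

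With your calibration $U_n\asymp n^{1/(2bK)}$, $|\phi_0(u)|\asymp U_n^{-b}$ on the support of $\hat g_n$, and $\delta_n\asymp U_n^{-\beta}$, the additive bump $\hat g_n$ \emph{dominates} $\phi_0$ there whenever $b>\beta$ (the interesting regime). Hence $|\phi_1|\asymp U_n^{-\beta}$ and, from your own expansion, the term $j=K-1$ already gives $|\phi_1^K-\phi_0^K|\gtrsim |\hat g_n|\,|\phi_1|^{K-1}\asymp U_n^{-\beta K}$. Since $U_n^{-\beta K}=n^{-\beta/(2b)}\gg n^{-1/2}$, the perturbation in $\phi_Y$ sits far above the noise floor; integrating over the support of width $\asymp U_n$ yields $n\,\chi^2\gtrsim n\,U_n^{1-2\beta K}=n^{(2(b-\beta)K+1)/(2bK)}\to\infty$. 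No weighted Plancherel refinement can rescue this: the weight $1/f_0^{*K}$ only \emph{increases} the $\chi^2$. Your diagnosis (``the sharp estimate requires a weighted Parseval argument'') misidentifies the obstruction.

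What is missing is a structural idea specific to the grouped-data model: one must perturb $\phi_0$ \emph{multiplicatively by a $K$-th root of unity}, so that $\phi_1=e^{i2\pi/K}\phi_0$ on the main part of the spectral support and hence $\phi_1^K=\phi_0^K$ there exactly. The difference $\phi_1^K-\phi_0^K$ then lives only on narrow transition zones, where one arranges $|\phi_0|\asymp|u|^{-b}$, yielding $|\phi_1^K-\phi_0^K|\lesssim|u|^{-bK}$ and the correct $\chi^2$ scaling. For the separation to be of the right size one simultaneously needs $|\phi_0|\asymp|u|^{-\beta}$ on the main intervals, so the base density must be built with an oscillating characteristic function (large on ``signal'' intervals, small on ``buffer'' intervals). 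Finally, a two-point test with this construction still falls short by a polynomial factor; the paper uses a family of $M_n\asymp 2^{m_n}$ such perturbations at disjoint frequencies, combined with the Varshamov--Gilbert bound, to recover the exponent $(2\beta-1)/(2bK)$ exactly. These three ingredients---the root-of-unity perturbation, the oscillating base density, and the many-hypotheses reduction---are what produce the ``$2bK$'' denominator and distinguish this lower bound from the classical deconvolution one.
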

Theorem \ref{prop_2} does not cover the cases where $\beta \in (0.5,1]$. However, at the expense of some additional technicalities, it is possible to extend the result to this case. We neither formulate nor prove lower bound results associated to the upper bounds of Proposition \ref{prop}, part (ii) and (iii). Indeed, part (ii) matches the optimal convergence rate for density deconvolution problems
with known error density  and part (iii) has been proven optimal in Meister (2007). \\

Gathering the results of Proposition \ref{prop} and Theorem \ref{prop_2} we have established that our estimation procedure is rate optimal up to a logarithmic factor. It remains unclear if the logarithmic loss  is avoidable. However, we suspect that the rate cannot be improved as some uniform control of the empirical characteristic function is required, which typically leads to a logarithmic loss.

\section{Adaptive estimation}\label{sec ad}

The cut-off $m^*$ is defined in terms of the characteristic function:  the estimator of $\hat{\phi}_X$ is set to zero as soon as $\phi(u)$ is below some critical threshold which is essentially of the order $n^{-1/2}$.  This is intuitive, since a reasonable estimate of the characteristic function in the denominator is no longer possible when this object is of smaller order than the standard deviation.  
It is interesting to notice that this theoretical cut-off $m^*$, which may vary in a certain interval and guarantees rate optimality of the estimator, has an empirically accessible counterpart.  
This motivates the definition of the empirical cut-off in terms of the empirical characteristic function. Once $\hat{\phi}$ is (up to an additional logarithmic term) below some critical threshold, $\hat{\phi}_X$ is set to ~zero. For  a numerical  constant $\eta>1$, to be chosen, we  introduce the data driven version of $m^*$,
\begin{align*}
\hat{m}_{\eta}:=\min\Big\{  \min\{u: |\hat{\phi}(u) | \leq  (K n)^{-\frac{1}{2} } + \sqrt{ \eta /K} (n/\log n)^{-\frac{1}{2}  } \} , n^{\frac{1}{K} } \Big\}. 
\end{align*}
The corresponding  estimator $\hat{f}_{\hat{m}_{\eta} }$ adapts automatically to the unknown smoothness class, so  the convergence rate is simultaneously minimax over a collection of nonparametric classes. 
\begin{theorem}\label{Hauptsatz_ad}Let constants $\overline{B}, \underline{B}>\frac{1}{2}, \overline{C},\underline{C'}>0, \overline{C}_X$ and $\overline{C}_f$ be given.
\begin{itemize}
\item[(i)]    Define
\[
{I}:= \{(\beta,b):  \overline{B}\geq b \geq \beta >\underline{B}\} \times (0, \overline{C}]\times [\underline{C'},\infty) \times (0, \overline{C}_X]  \times (0,  \overline{C}_f].
\]
 Then, there exists some positive real $\mathcal{C}$ such that 
\begin{align*}
\sup_{ (\beta, b,C, C',C_X,C_f)  \in  {I}  } \Big( \frac{ \sup_{ f\in \mathcal{F}(\beta, b,C, C',C_X,C_f)  }  \E_f[ \|f - \hat{f}_{\hat{m}_{\eta} }\|^2  ]  }{   (n/\log n) ^{-\frac{(2\beta - 1)}{2 K b}  }        }\Big)  
\leq \mathcal{C}. 
\end{align*}
\item[(ii)]  Define   
\begin{align*}
{I}:= [\underline{B},\infty) \times (0,\overline{\rho} ]  \times(-\infty, \overline{B} ] \times (0, \overline{c}  ] \times (0,\overline{C} ] \times [\underline{C'},\infty)  \times (0, \overline{C}_X]  \times (0,  \overline{C}_f]
.
\end{align*}
Then, there exists a positive real $\mathcal{C}$ such that 
\begin{align*}
\sup_{(\beta, \rho, b,c,  C, C', C_X, C_f)\in I  }\Big( \frac{ \sup_{ f\in \mathcal{H}(\beta,\rho,b, c, C, C',C_X,C_f)  }  \E_f[ \|f - \hat{f}_{\hat{m}_{\eta} }\|^2  ]  }{   (\log n) ^{-\frac{(2\beta - 1)}{\rho}  }        }\Big)  \leq \mathcal{C}.
\end{align*}
\item[(iii)] Let
\[
I= (0,\overline{\rho}]\times (0,\overline{c} ]\times  [\underline{C}, \overline{C}]\times [\underline{C'}, \overline{C'}  ]\times (0,\overline{C}_X]\times (0, \overline{C}_f].
\]
Then, there exists some positive real $\mathcal{C}$ such that 
\begin{align*}
\sup_{ (\rho,c, C, C',C_X,C_f)  \in  {I}  }\Big( \frac{\sup_{ f\in \mathcal{G}(\rho,c, C, C',C_X,C_f)  }  \E_f[ \|f - \hat{f}_{\hat m_{\eta}}\|^2  ] }{n^{-\frac{1}{K}  } ( \log n)^{\frac{1}{\rho} }    } \Big) 
\leq \mathcal{C}. 
\end{align*}
\end{itemize}
\end{theorem}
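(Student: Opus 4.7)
The plan is to show that the data-driven bandwidth $\hat{m}_{\eta}$ concentrates tightly around the oracle cutoff $u_n^{(\gamma,\eps)}$ of Proposition \ref{prop}, and then invoke Theorem \ref{Hauptsatz_Schranke} with this random bandwidth. The key tool is uniform control of $\hat{\phi} - \phi$ on $[-n^{1/K}, n^{1/K}]$. Concretely, I would introduce the concentration event
\[
\Omega_n := \Big\{ \sup_{|u| \leq n^{1/K}} \big| \hat{\phi}(u) - \phi(u) \big| \leq \kappa \sqrt{\log n / n} \Big\}
\]
for a constant $\kappa > 0$ to be fixed. A Talagrand-type inequality for the empirical process $u \mapsto \hat{\phi}(u) - \phi(u)$ (or, alternatively, pointwise Bernstein bounds on a fine grid combined with the Lipschitz estimate $|\hat{\phi}(u) - \hat{\phi}(u')| \leq |u-u'| \cdot \frac{1}{n}\sum_j |Y_j|$, controlled via $\E[Y^2] \leq K C_X$) yields $\PP(\Omega_n^c) \leq c_1 n^{-p}$ for arbitrary $p$ upon enlarging $\kappa$. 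The constants depend only on $C_X$, hence the bound is uniform over each parameter set $I$.

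On $\Omega_n$, I would sandwich $\hat{m}_{\eta}$ between two deterministic cutoffs $u_n^{(\gamma_1,\eps_1)}$ and $u_n^{(\gamma_2,\eps_2)}$ with $\gamma_1 > \gamma_2$. For $|u| \leq u_n^{(\gamma_1,\eps_1)}$ with $\gamma_1$ slightly above $\sqrt{\eta/K}$, the value $|\phi(u)|$ lies sufficiently above $(n/\log n)^{-1/2}$, so the reverse triangle inequality on $\Omega_n$ forces $|\hat{\phi}(u)|$ to exceed the threshold $(Kn)^{-1/2} + \sqrt{\eta/K}(n/\log n)^{-1/2}$ in the definition of $\hat{m}_{\eta}$, giving $\hat{m}_{\eta} \geq u_n^{(\gamma_1,\eps_1)}$. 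Conversely, at $u = u_n^{(\gamma_2,\eps_2)}$ with $\gamma_2$ slightly below $\sqrt{\eta/K}$, $|\hat{\phi}(u)|$ drops below the threshold, so $\hat{m}_{\eta} \leq u_n^{(\gamma_2,\eps_2)}$. The numerical parameter $\eta > 1$ must be coordinated with $\gamma = \sqrt{1+2/K+\delta}$ of Theorem \ref{Hauptsatz_Schranke} so that both endpoints lie in the interval $[u_n^{(\gamma,\eps)}, u_n^{(a)}]$ allowed by Proposition \ref{prop}.

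Finally, I would split
\[
\E[\|f - \hat{f}_{\hat{m}_{\eta}} \|^2] = \E[\|f - \hat{f}_{\hat{m}_{\eta}}\|^2 \mathds{1}_{\Omega_n}] + \E[\|f - \hat{f}_{\hat{m}_{\eta}}\|^2 \mathds{1}_{\Omega_n^c}].
\]
On $\Omega_n$, one retraces the proof of Theorem \ref{Hauptsatz_Schranke} with $m$ replaced by the random $\hat{m}_{\eta}$; using the sandwich, the bias $\|f - f_{\hat{m}_{\eta}}\|$ is dominated by $\|f - f_{u_n^{(\gamma_1,\eps_1)}}\|$ and the variance-type integrals up to $\hat{m}_{\eta}$ are bounded by their values up to $u_n^{(\gamma_2,\eps_2)}$, yielding the three rates of Proposition \ref{prop} in the three regimes. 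On $\Omega_n^c$, a crude deterministic bound on $\|\hat{f}_{\hat{m}_{\eta}}\|^2$ obtained from $\hat{m}_{\eta} \leq n^{1/K}$ together with the definition $\hat{\phi}_X = \exp(\hat{\psi}/K)$ grows at most sub-exponentially in $n$, so multiplying by $\PP(\Omega_n^c) \leq c_1 n^{-p}$ with $p$ sufficiently large yields a negligible contribution. The main obstacle will be the sandwiching step: the constants $\eta, \kappa, \eps, \delta$ must be calibrated so that both endpoints produce the Proposition \ref{prop} rate uniformly across the ordinary-smooth, mixed, and super-smooth regimes. The super-smooth class $\mathcal{G}(\rho,c)$ is the most delicate since $u_n^{(\gamma,\eps)}$ scales only like $(\log n)^{1/\rho}$, leaving very little numerical slack between $\gamma_1$ and $\gamma_2$; a secondary technicality is justifying that the variance and distinguished-logarithm estimates of Theorem \ref{Hauptsatz_Schranke} carry over verbatim to a data-dependent cutoff on the high-probability event $\Omega_n$.
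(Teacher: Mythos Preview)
Your overall architecture—sandwich $\hat m_\eta$ between two deterministic cutoffs and invoke the risk bound of Theorem~\ref{Hauptsatz_Schranke}—is exactly the paper's strategy. The difference, and the place where your plan breaks, is in how the two sides of the sandwich are obtained. You want a \emph{single} uniform event $\Omega_n=\{\sup_{|u|\leq n^{1/K}}|\hat\phi-\phi|\leq\kappa\sqrt{\log n/n}\}$ to deliver both inequalities. For the lower bound $\hat m_\eta\geq u_n^{(\gamma_1,\eps_1)}$ this is fine. For the upper bound, however, you need $|\hat\phi(u_n^{(\gamma_2,\eps_2)})|$ to fall \emph{below} the threshold $\sqrt{\eta/K}\sqrt{\log n/n}$, and on $\Omega_n$ the best you get is $|\hat\phi|\leq|\phi|+\kappa\sqrt{\log n/n}=((1+\eps_2)\gamma_2+\kappa)\sqrt{\log n/n}$. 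This forces $(1+\eps_2)\gamma_2+\kappa<\sqrt{\eta/K}$, hence $\kappa<\sqrt{\eta/K}$. But the grid-plus-Lipschitz argument over the range $[-n^{1/K},n^{1/K}]$ only yields $\PP(\Omega_n^c)\to 0$ when $\kappa>\sqrt{1+2/K}$ (cf.\ Lemma~\ref{letztes_Hilfslemma}); to get the ``arbitrary $p$'' you invoke, $\kappa$ must be even larger. These two constraints on $\kappa$ are incompatible unless $\eta>K+2$, whereas the theorem is stated for any $\eta>1$.

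The paper circumvents this by treating the two sides asymmetrically. It sets $m_0=\min\{u:|\phi(u)|=(2\sqrt{\eta/K}+\kappa)\sqrt{\log n/n}\}$ and $m_1=\min\{u:|\phi(u)|=(Kn)^{-1/2}\}$. The event $\{\hat m<m_0\}$ is controlled by uniform concentration on $[0,m_0]$ exactly as you describe, giving $\PP(\hat m<m_0)\leq C m_0 n^{-1/K}$. The event $\{\hat m>m_1\}$ is controlled by a \emph{pointwise} Hoeffding bound at the single point $m_1$, yielding $\PP(\hat m>m_1)\leq n^{-\eta/K}$; no uniform constant $\kappa$ enters here, so any $\eta>1$ suffices. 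On the good set $\{m_0\leq\hat m\leq m_1\}$ the paper reruns the proof of Theorem~\ref{Hauptsatz_Schranke}; on each exceptional set it does not rely on a crude $\|\hat f_{\hat m}\|^2\leq n^{1/K}$ bound but instead re-uses the pointwise estimate $|\hat\phi_X-\phi_X|\leq 2|\phi_X|+|\hat\phi-\phi|^{1/K}$ together with H\"older/Rosenthal to extract an extra factor $\PP(\hat m>m_1)^{1/\eta}\leq n^{-1/K}$. If you split your concentration argument in the same asymmetric way—uniform for the lower sandwich, pointwise for the upper—your plan goes through for every $\eta>1$.
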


 In earlier works, the adaptive bandwidth selection has only been investigated for the particular case of  symmetric densities and for polynomially decaying characteristic functions. Moreover, the cross-validation strategy proposed in Meister (2007) cannot be extended to the present framework, since it leads to the choice $n^{-1/(2\beta K)}$, which gives a sub-optimal rate of convergence for $b>\beta$. Finally, in comparison to cross validation, our adaptive estimator has the strong advantage of being computationally extremely simple and fast, since it only requires the evaluation of the characteristic function. \\

The adaptive choice $\hat m_{\eta}$ depends on the constant $\eta$ that needs to be chosen. I is not unusual when it comes to adaptation that an hyper parameter needs to be calibrated. The same holds true in  the context of wavelet adaptive estimation or when the spectral cut-off is obtained by minimization of a penalty.  Our simulation results indicate that the procedure works well if $\eta$ is chosen close to $1$.

\section{Discussion \label{sec discuss}}

In this article, we provided a  fully constructive adaptive and optimal estimation procedure in the context of grouped data observations, which covers both the non symmetric and symmetric cases.  The study of this framework revealed that it has its own specificities and differs from what one would expect in a classical deconvolution framework.  

It is also worth noticing that if $K$ increases, the rates of convergence obtained deteriorate in Theorem \ref{Hauptsatz_ad}. If we let $K\rightarrow \infty$ then,  (see Meister (2007) for the lower bound results)
$$\underset{\hat f}{\inf}\underset{f\in \mathcal{F}}{\sup}\E\big(\|\hat{f}_m-f\|^2\big|\big)>0,$$ where $\mathcal{F}$ is a class of densities, for instance one of those defined in Section \ref{sec rate}. This result is intuitive. Assume that the density $f$ has expectation $\mu$ and finite variance $\sigma^{2}$. Then we have $$\frac{(Y_{j}-\mu)}{\sqrt{K}}\underset{K\rightarrow \infty}{\longrightarrow}\mathcal{N}(0,\sigma^{2}).$$ It means that for large $K$, each observation $Y_{j}$ is close in law to a Gaussian random variable depending on two parameters only. The whole density $f$ is then lost, only its expectation and variance may be recovered from the observations.  This phenomenon is also observed in Duval (2014) in the case of a compound Poisson process observed at a sampling rate tending to infinity. \\

Finally, we want to emphasize that the adaptive estimator we define in this paper applies for a wider range of problems than grouped data. In fact, it can be used each time the quantity of interest can be recovered through a closed form inverse of the characteristic function of the observations.

Suppose that a L\'evy process  $Y$ is observed over $[0,T]$ at low frequency and denote by $\Delta\geq 1$ the sampling rate.  A vast literature is available on the estimation of the underlying parameters, and in particular the estimation of the L\'evy measure (see among others Neumann and Rei\ss~(2009), Comte and Genon-Catalot (2010) or Kappus~(2014)).
However, one may also consider the case where the quantity of interest is the distributional density of $Y_t$ for some $t>0$ (without loss of generality set $t=1$) rather than the jump density.

Suppose that $Y_1$ has a square integrable Lebesgue density $f$.  Let $\phi_1$ and $\phi_\Delta$ be the characteristic functions of $Y_1$ and $Y_\Delta$, respectively.  Then both characteristic functions are connected as $\phi_\Delta(u) = (\phi_1(u) )^{\Delta}$ or equivalently as $\phi_1 = \exp( (\Log\phi_\Delta)/\Delta )$. The estimation procedure proposed in this paper carries over immediately, leading to the estimator 
\[
\hat{f}_m(x) = \frac{1}{2\pi} \intm e^{ - iu x  } \hat{\phi}_1(u)   \d  u, \quad \text{with}  \quad  \hat{\phi}_{1} (u) = \exp\Big(\frac{1}{\Delta}\intu \frac{\hat{\phi}'_\Delta(v) }{\hat{\phi}_\Delta(v) }  \d v\Big)
\]
and the upper bound 
\begin{align*}
\E[  \| f- \hat{f}_m \|^2]  \leq \| f- f_m  \|^2 +\frac{C_1}{\Delta^2 n  } \intm   \frac{ |\phi_1(u) |^2}{ |\phi_{\Delta}(u) |^2  }  \d u  + C_2  \int_{ [m, u_{n}^{(\gamma,\epsilon)} ]  } \limits |\phi_1(u)|^2 \d u+ C_3 n^{-\frac{1}{\Delta} }.
\end{align*}
Balancing the terms when $\phi_{1}$ is polynomially decreasing leads to the rate  $(T/\log T)^{-\frac{2 \beta - 1}{2\Delta \beta}  }$.  For exponentially decaying characteristic functions, the convergence rate is, up to a logarithmic term, $n^{-\frac{1}{\Delta} }$. In a sense, the context of infinitely divisible distributions can be interpreted as a "grouped data model" with a non-integer group size $\Delta$. The choice of $\widehat m_{\eta}$ proposed in the present publication will lead to a minimax adaptive estimator.

\section{Numerical results} \label{sec num}

In this section we illustrate the behavior of our adaptive procedure by computing $L_{2}$ risks for various density functions $f$, when $n$ and $K$ are varying. The upper bound of Theorem \ref{Hauptsatz_Schranke} states that these risks should decrease as $n$ increases and should increase with $K$. As points of comparison, we compute also the risks of two different procedures. First, the cross-validation  estimator proposed in Meister (2007), even though no theoretical justification has been established that the cross-validation strategy works outside the particular case where $f$ is symmetric and the characteristic function decays polynomially. Second,  an oracle "estimator". More precisely the estimator $\widehat f_{m^{\star}}$, which is the estimator defined in \eqref{eq def est} where $m^{\star}$ corresponds to the following oracle bandwidth
\begin{align*}
m^\star= \arg\min_{m>0 } \|  f- \hat{f}_{m}  \|^2.
 \end{align*} 
This oracle can be explicitly computed if $f$ is known. We denote these different risks by $r,$ for the risk of our procedure, $r_{cv}$ for the cross-validation estimator  and $r^{*}_{or}$ for the oracle procedure. All three estimators are computed on the same samples.
 
 The $L^{2}$ risks are computed after 500 Monte-Carlo simulations of each estimator and averaging the results.  We consider the following group size  $K=5,\ 10,\ 20$ and $50$ and the following distributions:

 \begin{enumerate}
 \item [(i)] Normal distribution with mean value $2$ and variance $1$.
 \item [(ii)]  Gumbel distribution with mean $3$ and scaling parameter $1$. 
 \item [(iii)] Gamma distribution with parameters $6$ and $3$. 
 \item[(iv)]  Laplace distribution with mean $0.5$ and scale parameter $\beta=3$. 
 \end{enumerate}

 Moreover, we set $\eta =1.1$ for the adaptive procedure. Numerical experiments indicate that the procedure is sensitive to the choice of the numerical constant $\eta$ and that this constant should be chosen close to $1$.  The results are summarized in the tables below. 

\

\begin{center}

\begin{tabular}{|c|c||c|c|c||c|c|c|}
\hline 
    &     &       \multicolumn{3}{c||}{  $\mathcal{N}(2,1)$ }   &       \multicolumn{3}{c|}{ Gumbel(3,1) }    \\
\hline
$n$    &    $ K $    &   $ r_{or}^{*}$    &  $ r $    &  $ r_{cv} $    & $ r_{or}^{*}$    &  $ r $    &  $ r_{cv} $       \\
\hline 
\hline 
\multirow{4}{*}{1000}& 5 &  0.033 &  0.088& 0.095&  0.017    &  0.037    &    0.045    \\ 
\cline{2-8}
 &  10 & 0.045  & 0.124        &  0.144     &   0.031
   & 0.066    &     0.082     \\ 
\cline{2-8}
 & 20 & 0.057   &  0.156      &    0.185  & 0.043    &   0.092   &    0.111         \\
\cline{2-8}
 & 50 & 0.069  &    0.185    &   0.236    &   0.053
   &  0.117   &  0.154          \\  
\hline 
\multirow{4}{*}{5000}& 5 &  0.031   & 0.074      &    0.073  &   0.012   &  0.027    &   0.035      \\
\cline{2-8}
 &  10 & 0.039  &  0.114      &   0.123     &      0.027 
   &  0.056   &     0.062           \\
\cline{2-8}
 & 20 &0.053   &     0.147   & 0.168& 0.040 &  0.083 & 0.101     \\
\cline{2-8}
 & 50 & 0.066  &   0.183     &  0.227    &  0.052 & 0.110  &  0.143
         \\
 \hline 
\multirow{4}{*}{10000}& 5 & 0.007 & 0.018  &  0.021    &   0.011 &  0.025 &  0.038      \\
\cline{2-8}
 &  10 &0.017 & 0.045 &  0.055    &      0.025  &  0.053 & 0.059     \\
\cline{2-8}
 & 20 &    0.028  & 0.077 &  0.099    &   0.039 & 0.081 & 0.094 \\
\cline{2-8}
 & 50 &   0.039  & 0.105  &  0.221    &    0.051 & 0.114 & 0.140      \\
 \hline 
 \end{tabular}

\vspace{0.7cm}

 \begin{tabular}{|c|c||c|c|c||c|c|c|}
  \hline 
     &     &       \multicolumn{3}{c||}{  $\Gamma(6,3)$ }   &       \multicolumn{3}{c|}{Laplace(0.5,3) }    \\
\hline
$n$    &     $K$     &   $ r_{or}^{*}$    &  $ r$    &  $ r_{cv} $    & $ r_{or}^{*}$    &  $ r $    &  $ r_{cv} $       \\
\hline 
\hline 
\multirow{4}{*}{1000}& 5 &     0.021 &  0.050  & 0.060   & 0.070 &  0.152 & 0.149  \\
\cline{2-8}
 &  10 &0.039 & 0.089 &  0.110     & 0.114 &  0.239 &  0.260        \\
\cline{2-8}
 & 20 &0.054 & 0.125 & 0.158  &   0.150 &  0.312 &  0.367   \\
\cline{2-8}
 & 50 & 0.067  &  0.162  & 0.212     &     0.180 &  0.372 &  0.468
   \\
\hline 
\multirow{4}{*}{5000}& 5 & 0.016 & 0.037  & 0.050  &  0.055 &  0.118  & 0.121
  \\
\cline{2-8}
 &  10 &  0.033 & 0.076  &  0.090       &    0.100   & 0.214  & 0.231
    \\
\cline{2-8}
 & 20 &   0.049 &  0.116 & 0.134    & 0.140 & 0.294 & 0.324
     \\
\cline{2-8}
 & 50 &  0.065 & 0.159 & 0.202   & 0.173 & 0.374 &  0.438
   \\
 \hline 
\multirow{4}{*}{10000}& 5 &   0.013 & 0.032 & 0.040    &     0.047  &  0.103 & 0.098 \\
\cline{2-8}
 &  10 &0.030  &  0.071 &  0.090   &  0.094  &  0.197 &  0.208
  \\
\cline{2-8}
 & 20 &0.047 & 0.111 & 0.131   &  0.134  &  0.287 & 0.319
      \\
\cline{2-8}
 & 50 &  0.064   &  0.153  &  0.197  &    0.170  &  0.350  &  0.439    \\
 \hline 
\end{tabular}
\   \\[0.2cm]

\end{center}

From the above tables, we observe that all procedures performs well in all examples. As expected, we observe that for all procedures, the associated risks decrease as $n$ increases and that increasing $K$ deteriorates the risks. Moreover the adaptive strategy proposed in the present article shows a slightly better outcome than the cross-validation techniques. It is also numerically extremely efficient since it requires only one evaluation of the empirical characteristic function.

\section{Proofs}\label{sec proof}
\subsection{Preliminary results}

\begin{lemma}\label{An_Lemma2} Let $Z$ be an integrable random variable with characteristic function $\phi$.  Then, for  any  $h>0$,  
\[
\forall u\in \R:   |\phi(u)- \phi(u+h)|  \leq h  \E[|Z|]. 
\]
\end{lemma}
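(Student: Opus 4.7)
The plan is to reduce the statement to the elementary pointwise bound $|e^{ix}-1|\leq |x|$ valid for all real $x$, and then apply this inside an expectation. Concretely, I would first write the difference in terms of the defining expectation of the characteristic function:
\[
\phi(u+h)-\phi(u) = \E\bigl[e^{i(u+h)Z} - e^{iuZ}\bigr] = \E\bigl[e^{iuZ}(e^{ihZ}-1)\bigr].
\]

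Next, I would pull the absolute value inside the expectation using Jensen's inequality (or the triangle inequality for integrals) and use $|e^{iuZ}|=1$ to obtain
\[
|\phi(u+h)-\phi(u)| \leq \E\bigl[|e^{ihZ}-1|\bigr].
\]
This step is legitimate since $Z$ is integrable, in particular the integrand on the right is bounded and measurable.

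Finally, I would invoke the elementary inequality $|e^{ix}-1|\leq |x|$ for every $x\in\R$ (which follows for instance from writing $e^{ix}-1 = i\int_0^x e^{it}\,dt$ and taking moduli). Applied pointwise to $x=hZ(\omega)$ with $h>0$, this yields $|e^{ihZ}-1|\leq h|Z|$, and taking expectations gives
\[
|\phi(u+h)-\phi(u)| \leq h\,\E[|Z|],
\]
which is the claim.

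There is no real obstacle here; the only subtle point worth mentioning is that the bound is uniform in $u$, which is exactly what makes the argument trivial once one factors out $e^{iuZ}$ and uses $|e^{iuZ}|=1$. The integrability hypothesis on $Z$ ensures the right-hand side is finite and is precisely what is needed to justify the final step.
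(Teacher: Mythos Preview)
Your proof is correct and essentially identical to the paper's. The paper phrases the key inequality as $|e^{ix}-e^{iy}|\leq |x-y|$ applied directly to $x=uZ$, $y=(u+h)Z$, whereas you first factor out $e^{iuZ}$ and then apply $|e^{ihZ}-1|\leq |hZ|$; these are the same argument.
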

\begin{proof} Using that for any  $x,y\in \R$, $|e^{ix}- e^{iy}|\leq |x-y|$, leads to the result
\begin{align*}
&|\phi(u) - \phi(u+h)|  = |\E[e^{i uZ} - e^{i (u+h)Z} ]  |   \leq  h\E[|Z|].
\end{align*}
\end{proof}
For arbitrary  $c>0$, define the event  
\[
A_{c}:=\Big\{  \Big| \frac{1}{n}\sum_{j=1}^{n}  \big( |Y_j| -\E[|Y|] \big) \Big| \leq  c\Big\} .
\]
By the Markov inequality,   we derive that
\[
\PP(A_c^c) \leq \frac{1}{c^2 n} \E\big[Y^2\big] . 
\] 
\begin{lemma} \label{An_Lemma1} Let $h>0$. Then we have on the event $A_{c}$ the following inequality
\[
\forall u \in \R:  |\hat{\phi}(u) -\hat{\phi}(u +h)|  \leq h  \big(\E[|Y|]+c  \big). 
\]
\end{lemma}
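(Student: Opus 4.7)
The plan is to mimic the proof of the preceding Lemma, replacing the true characteristic function by its empirical counterpart and then invoking the event $A_c$ to control the empirical absolute first moment.

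First, I would write out $\hat{\phi}(u) - \hat{\phi}(u+h)$ as the empirical average
\[
\hat{\phi}(u) - \hat{\phi}(u+h) = \frac{1}{n}\sum_{j=1}^n \bigl(e^{iuY_j} - e^{i(u+h)Y_j}\bigr).
\]
By the triangle inequality followed by the elementary bound $|e^{ix} - e^{iy}| \leq |x-y|$ applied with $x = uY_j$ and $y = (u+h)Y_j$, each summand is bounded by $h|Y_j|$. This yields
\[
|\hat{\phi}(u) - \hat{\phi}(u+h)| \leq \frac{h}{n} \sum_{j=1}^n |Y_j|.
\]

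Second, I would unfold the definition of $A_c$. On this event,
\[
\frac{1}{n}\sum_{j=1}^n |Y_j| \leq \E[|Y|] + c,
\]
so combining with the previous display gives the claimed bound $|\hat{\phi}(u) - \hat{\phi}(u+h)| \leq h(\E[|Y|] + c)$ uniformly in $u \in \R$.

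There is no real obstacle here: the only substantive ingredient beyond the analogue for the deterministic characteristic function is the uniform empirical control of $\frac{1}{n}\sum |Y_j|$, which is precisely the content of the event $A_c$, and its probability has already been estimated via Markov's inequality just before the statement. The bound is uniform in $u$, as required for subsequent uniform-deviation arguments for $\hat{\phi}$.
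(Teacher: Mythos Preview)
Your proof is correct and follows essentially the same approach as the paper: apply the triangle inequality together with the Lipschitz bound $|e^{ix}-e^{iy}|\leq |x-y|$ to obtain $|\hat{\phi}(u)-\hat{\phi}(u+h)|\leq \frac{h}{n}\sum_{j=1}^n |Y_j|$, then invoke the definition of $A_c$ to bound the empirical mean of $|Y_j|$ by $\E[|Y|]+c$.
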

\begin{proof} The triangle inequality and the fact  that $x\rightarrow e^{iux}$ is Lipschitz with constant 1, give
\begin{align*}  
   |\hat{\phi} (u) - \hat{\phi}(u+h) \big|    \leq    \frac{h}{n}  \sum_{j=1}^{n} |Y_j|. 
\end{align*}
The definition of $A_c$ leads to
\begin{align*}
  \frac{h}{n}  \sum_{j=1}^{n} |Y_j|   = h \Big(\E[|Y|] +   \frac{1}{n}\sum_{j=1}^{n} (|Y_j|- \E[|Y|] ) \Big)\leq  h (\E[|Y|]+c).
\end{align*} 
\end{proof}

\begin{lemma} \label{An_Lemma3}Fix $h>0$ and some  $u>0$. Define the grid points $t_k:=kh,\ k\in\{1,  \ldots \lceil \frac{u}{h} \rceil\}$. Then the following holds true for arbitrary $\tau>0$.  
\begin{align*}
\PP\Big(\exists  k\in \{1,\ldots, \lceil \tfrac{u}{h} \rceil\}: |\hat{\phi}(t_k ) - \phi(t_k) | > \tau \Big( \frac{  \log n }{n}\Big)^{\frac{1}{2} }   \Big) \leq  2\lceil \tfrac{u}{h} \rceil  n^{-\tau^2/2 }.
\end{align*}
\end{lemma}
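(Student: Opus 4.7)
The plan is the standard recipe: obtain a pointwise concentration inequality at each grid point, then conclude by a union bound over the $\lceil u/h\rceil$ points. Concretely, it suffices to prove that for every fixed $k$,
\[\PP\bigl(|\hat\phi(t_k)-\phi(t_k)|>\tau\sqrt{\log n/n}\bigr)\leq 2n^{-\tau^2/2},\]
and then sum this estimate over $k$.

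For the pointwise bound, I would start from the identity
\[\hat\phi(t_k)-\phi(t_k)=\frac{1}{n}\sum_{j=1}^n Z_j,\qquad Z_j:=e^{it_k Y_j}-\phi(t_k),\]
which exhibits the left-hand side as an average of iid centered complex random variables with $|Z_j|\leq 2$ almost surely and with the variance estimate $\E|Z_j|^2=1-|\phi(t_k)|^2\leq 1$ (since $|e^{it_k Y_j}|=1$). At this point I would invoke a Bernstein-type inequality for sums of independent complex (equivalently, Hilbert-space-valued) random variables, e.g.\ Pinelis's extension of Bernstein's inequality, which yields
\[\PP\bigl(|\hat\phi(t_k)-\phi(t_k)|>t\bigr)\leq 2\exp\!\Big(-\frac{nt^2}{2(1+\tfrac{2}{3}t)}\Big).\]
Substituting $t=\tau\sqrt{\log n/n}$ and noting that the lower-order term $\tfrac{2}{3}t$ in the denominator vanishes as $n\to\infty$ gives the required $2n^{-\tau^2/2}$. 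The union bound over the $\lceil u/h\rceil$ grid points then closes the argument.

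The main obstacle is obtaining the correct leading constant $\tau^2/2$ in the exponent. A naive approach — applying real-valued Hoeffding separately to $\Re(\hat\phi(t_k)-\phi(t_k))$ and $\Im(\hat\phi(t_k)-\phi(t_k))$ and recombining via $|W|^2=(\Re W)^2+(\Im W)^2$ — loses a factor of $2$ in the exponent, producing only $\tau^2/4$, because it exploits the marginal variances of the real and imaginary parts separately rather than the single complex variance $\E|Z_j|^2\leq 1$. The Hilbert-space (Pinelis) Bernstein inequality is precisely the tool that extracts the full variance budget of the complex random variable and thereby yields the sharp constant; the rest of the proof is the essentially trivial union bound.
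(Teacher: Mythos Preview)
Your plan---union bound over the grid plus a pointwise concentration inequality---is exactly the paper's. The paper simply invokes ``the Hoeffding inequality'' for the pointwise step without addressing the complex-valued issue you raise (and its displayed chain even contains typos: the modulus is written inside the sum, and $2\exp(-\tau^2\log n)$ is then equated with $2\lceil u/h\rceil n^{-\tau^2/2}$). Your recourse to Pinelis--Bernstein, exploiting $\E|Z_j|^2\le 1$ rather than only the crude range bound $|Z_j|\le 2$, is the right way to recover the exponent $\tau^2/2$; the naive real/imaginary split you describe would indeed only give $\tau^2/4$.

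One residual point: the Bernstein bound you quote only delivers $2n^{-\tau^2/2}$ in the limit. For finite $n$ the correction $\tfrac{2}{3}t$ in the denominator gives an exponent $\tau^2/(2+o(1))$, strictly weaker than the lemma as stated. This is harmless downstream---the lemma is ultimately applied (via Corollary~\ref{Korollar} and Lemma~\ref{letztes_Hilfslemma}) with $\kappa>\tau$ and $\gamma=\sqrt{1+2/K+\delta}$, $\delta>0$, so the built-in slack absorbs any $o(1)$ in the exponent---but if you want the lemma exactly as written you should either restrict to $n\ge n_0(\tau)$ or replace the leading constant $2$ by a $\tau$-dependent constant $C(\tau)$, which the Bernstein estimate does supply since $\exp\!\big(\tfrac{\tau^3}{3}\sqrt{(\log n)^3/n}\big)$ is bounded in $n$.
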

\begin{proof}
This  is a consequence of the series of inequalities 
\begin{align*}
&\PP\Big(\exists k\leq \lceil \tfrac{u}{h} \rceil :  | \hat{\phi} (t_k) - \phi(t_k)        |   \geq     \tau \Big(  \frac{\log n}{n} \Big)^{\frac{1}{2} }   \Big)   
 \leq   \sum_{k=1}^{\lceil u/h \rceil} \PP \Big(  | \hat{\phi} (t_k) - \phi(t_k)        |   \geq     \tau \Big(  \frac{\log n}{n} \Big)^{\frac{1}{2} }  \Big)  \\
& \leq   \sum_{k=1}^{\lceil u/h \rceil} \PP \Big(  \sum_{j=1}^{n}|e^{it_{k}Y_{j}} -\E[e^{it_{k}Y_{j}}]       |   \geq     \tau \big( n {\log n} \big)^{\frac{1}{2} }  \Big)  \leq   \sum_{k=1}^{\lceil u/h \rceil}  2\exp( - \tau^2 (\log n)  )   =2\lceil u/h\rceil n^{-\frac{\tau^{2}}{2} }. 
\end{align*} 
The  last inequality follows from the Hoeffding inequality.\end{proof}

Fix $ c, h$ and $\tau$. 
For arbitrary $u>0$, define the event 
\[
B_{c,h,\tau}(u):= A_{c}  \cap \Big\{  \forall k\leq  \lceil \tfrac{u}{h}\rceil  :    |\hat{\phi}(t_k)- \phi(t_k) |  \leq 
\tau \Big( \frac{\log n}{n}    \Big)^{1/2}  \Big\} .
\]
\begin{lemma}\label{An_Lemma4} On the event $B_{ c,h,\tau}(u)$, the following holds
\[
\sup_{v\in [0,u ]  }  |\hat{\phi}(v) - \phi(v)|   \leq h (2 \E[|Y|]  +c )  + \tau \Big( \frac{\log  n}{n} \Big)^{1/2}. 
\]
\end{lemma}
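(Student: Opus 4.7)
The plan is to combine a grid-discretization argument with the three preceding lemmas. Given any $v \in [0,u]$, I will approximate $v$ by the nearest grid point $t_k = kh$ for some $k \in \{1, \ldots, \lceil u/h \rceil\}$ (or by $0$ if $v \leq h$, in which case $\hat{\phi}(0) = \phi(0) = 1$ trivially), so that $|v - t_k| \leq h$. Then the triangle inequality gives
\[
|\hat{\phi}(v) - \phi(v)| \leq |\hat{\phi}(v) - \hat{\phi}(t_k)| + |\hat{\phi}(t_k) - \phi(t_k)| + |\phi(t_k) - \phi(v)|.
\]

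The three summands are controlled by the three preliminary lemmas. For the first, invoking Lemma \ref{An_Lemma1} (which is applicable because $B_{c,h,\tau}(u) \subseteq A_c$) yields $|\hat{\phi}(v) - \hat{\phi}(t_k)| \leq h(\E[|Y|] + c)$. For the second, the very definition of $B_{c,h,\tau}(u)$ ensures $|\hat{\phi}(t_k) - \phi(t_k)| \leq \tau(\log n / n)^{1/2}$ uniformly in $k \leq \lceil u/h \rceil$. For the third, Lemma \ref{An_Lemma2} applied to $Y$ gives $|\phi(v) - \phi(t_k)| \leq h \E[|Y|]$ (this bound is deterministic and requires no event).

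Summing these three bounds and taking the supremum over $v \in [0,u]$ yields
\[
\sup_{v \in [0,u]} |\hat{\phi}(v) - \phi(v)| \leq h(\E[|Y|] + c) + \tau\Big(\frac{\log n}{n}\Big)^{1/2} + h\E[|Y|] = h(2\E[|Y|] + c) + \tau\Big(\frac{\log n}{n}\Big)^{1/2},
\]
which is the desired inequality. There is no real obstacle here: the lemma is a textbook chaining-style argument, and the only mild point of care is ensuring that for every $v \in [0,u]$ a grid point $t_k$ within distance $h$ exists, which is guaranteed by the choice of grid $\{kh : 1 \leq k \leq \lceil u/h \rceil\}$ together with the trivial case $v = 0$.
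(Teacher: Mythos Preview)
Your proof is correct and follows essentially the same approach as the paper: the same triangle inequality decomposition at the nearest grid point, with the three summands controlled respectively by Lemma~\ref{An_Lemma1}, the definition of $B_{c,h,\tau}(u)$, and Lemma~\ref{An_Lemma2}.
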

\begin{proof}  Consider the grid point $t_k$ defined in Lemma \ref{An_Lemma3} and $v \in [t_k-h/2,t_{k} +h/2]$ for some $k\leq \lceil \tfrac{u}{h}\rceil$.  Then,  applying Lemma \ref{An_Lemma2}  and Lemma \ref{An_Lemma1}, we have on the event $B_{c,h,\tau}(u)$
\begin{align*}
 |\hat{\phi}(v) - \phi(v)|  \leq & | \hat{\phi}(v) - \hat{\phi}(t_k) |  +| \hat{\phi}(t_k) - \phi(t_k) |
+  |\phi(t_k)- \phi(v)|\\
\leq & h(2\E[|Y|]  +c )  + \tau \Big( \frac{\log  n}{n} \Big)^{1/2} . 
\end{align*}
This is the desired result. 
\end{proof}

The following Corollary \ref{Korollar} is a consequence of Lemma \ref{An_Lemma4}.

\begin{corollary}\label{Korollar} Assume that  $h=o((\log n/n)^{1/2})$. Then, for arbitrary $\tau<\kappa$, there exists some positive integer  $n_0 $, that does not depend of $u$, such that  for any $n\geq n_0$,  on the event $B_{c,h,\tau}(u)$ it holds that
\begin{align*}
\forall  v\in [0,u]: \, |\hat{\phi}_n (v) - \phi(v)|  \leq \kappa (\log n/n)^{\frac{1}{2} }. 
\end{align*}
\end{corollary}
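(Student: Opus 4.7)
The plan is to derive the corollary as a direct consequence of Lemma \ref{An_Lemma4}, with the only real content being a quantitative comparison of the two summands appearing in that bound. Lemma \ref{An_Lemma4} asserts that on the event $B_{c,h,\tau}(u)$ one has
\[
\sup_{v\in[0,u]} |\hat\phi(v)-\phi(v)| \leq h(2\E[|Y|]+c)+\tau (\log n/n)^{1/2}.
\]
My goal is to absorb the first term into $\kappa(\log n/n)^{1/2}$ for $n$ large.

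First I would observe that since $h = o((\log n/n)^{1/2})$ and the constant $2\E[|Y|]+c$ does not depend on $n$ or on $u$, one has
\[
h(2\E[|Y|]+c) \;=\; o\!\left((\log n/n)^{1/2}\right).
\]
Fixing any $\tau<\kappa$, the gap $\kappa-\tau>0$ is strictly positive, so there exists an integer $n_0$ (depending only on $\kappa-\tau$, on $c$ and on $\E[|Y|]$, and in particular \emph{not} on $u$) such that for every $n\geq n_0$,
\[
h(2\E[|Y|]+c) \;\leq\; (\kappa-\tau)(\log n/n)^{1/2}.
\]

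Combining the two displays, on the event $B_{c,h,\tau}(u)$ and for $n\geq n_0$,
\[
\sup_{v\in[0,u]} |\hat\phi(v)-\phi(v)| \leq (\kappa-\tau)(\log n/n)^{1/2} + \tau(\log n/n)^{1/2} = \kappa(\log n/n)^{1/2},
\]
which is the claimed bound. I would then just remark that the uniformity in $u$ is automatic: $n_0$ is determined by the fixed quantities $\kappa-\tau$, $c$ and $\E[|Y|]$ only, while $u$ enters exclusively through the event $B_{c,h,\tau}(u)$ on which the bound is asserted.

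There is no genuine obstacle here; the statement is essentially a re-packaging of Lemma \ref{An_Lemma4}, and the only subtle point worth flagging explicitly in the writeup is that $n_0$ is indeed independent of $u$, since the cardinality of the grid $\lceil u/h\rceil$ that governs the deviation event $B_{c,h,\tau}(u)$ plays no role in the deterministic inequality we are manipulating.
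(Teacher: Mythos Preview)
Your argument is correct and matches the paper's approach: the paper simply states that the corollary is a consequence of Lemma~\ref{An_Lemma4}, and your proof fills in exactly the details one would expect, namely that $h(2\E[|Y|]+c)=o((\log n/n)^{1/2})$ can be absorbed into the gap $\kappa-\tau$ for $n\geq n_0$ with $n_0$ independent of $u$. Your remark on the dependence of $n_0$ also aligns with the paper's subsequent comment that $n_0$ is monotone in $\tau$, $\E[|Y|]$ and $\kappa$.
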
 Note that  this integer $n_{0}$ is monotonically increasing with respect to $\tau$ and $\E[|Y|]$ and monotonically decreasing with respect to $\kappa$.
Finally, for $u>0$, define the event 
\begin{align*}
B_{\kappa}(u)^{c}:=\Big\{ \exists v\in [0,u] : |\hat{\phi}(v)- \phi(v)| > \kappa (\log n/n)^{1/2} \Big\}.
\end{align*}
\begin{lemma}  \label{letztes_Hilfslemma}Let $\kappa> \sqrt{1+\frac{2}{K}}$. There exists some $C>0$ which is monotonically increasing with respect to $\E[Y^{2}]$ such that    for any $u\geq 1$ and $n\in \N$, 
\[
 \PP( B_{\kappa}(u)^c  )  \leq   C u  n^{-\frac{1}{K} } . 
\]
\end{lemma}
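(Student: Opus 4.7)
The plan is to reduce the uniform deviation on $[0,u]$ to a suitable grid and invoke the tools already built: Corollary \ref{Korollar} to pass from grid control to uniform control, Lemma \ref{An_Lemma3} for the concentration of $\hat\phi(t_k)-\phi(t_k)$ at the grid points, and Markov's inequality on $A_c^c$. The inclusion $B_{c,h,\tau}(u)\subset B_\kappa(u)$ (for $n$ large enough) given by Corollary \ref{Korollar} yields
\[
\PP\big( B_\kappa(u)^c \big) \leq \PP(A_c^c) + \PP\Big(\exists k\leq \lceil u/h\rceil :\,|\hat{\phi}(t_k)-\phi(t_k)|>\tau(\log n/n)^{1/2}\Big),
\]
so the whole proof boils down to tuning $\tau$, $c$ and the mesh $h$ so that each of these two terms is $O(u n^{-1/K})$ with a constant monotone in $\E[Y^2]$.

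For the choice of parameters: since $\kappa>\sqrt{1+2/K}$, fix $\tau$ with $\sqrt{1+2/K}<\tau<\kappa$ and set $c=1$. Then $\tau^2/2-1/K>1/2$, so one can pick $\alpha\in(1/2,\, \tau^2/2-1/K)$ and take $h_n:=n^{-\alpha}$. The choice $\alpha>1/2$ ensures $h_n=o((\log n/n)^{1/2})$, so that Corollary \ref{Korollar} applies: there exists $n_0$ (depending monotonically on $\E[|Y|]$ and $\tau$) such that $B_{1,h_n,\tau}(u)\subset B_\kappa(u)$ for $n\geq n_0$. Note the mesh is a fixed power of $n$ independent of $u$, which is what allows a bound linear in $u$.

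It then remains to control the two right-hand terms. Markov's inequality (as stated right above Lemma \ref{An_Lemma1}) gives $\PP(A_1^c)\leq \E[Y^2]/n$, and since $K\geq 2$ we have $n^{-1}\leq n^{-1/K}$, so this is $\leq \E[Y^2]\, u\, n^{-1/K}$ using $u\geq 1$. For the grid term, Lemma \ref{An_Lemma3} gives the bound $2\lceil u/h_n\rceil n^{-\tau^2/2}\leq 4u\, n^{\alpha-\tau^2/2}\leq 4u\,n^{-1/K}$, by the choice of $\alpha$. Adding the two estimates yields $\PP(B_\kappa(u)^c)\leq (\E[Y^2]+4)u\,n^{-1/K}$ for all $n\geq n_0$, and one absorbs the finitely many values $n<n_0$ into the constant by using the trivial bound $\PP(B_\kappa(u)^c)\leq 1\leq n_0^{1/K}\,u\,n^{-1/K}$; the resulting $C=\max(\E[Y^2]+4,\,n_0^{1/K})$ is monotone in $\E[Y^2]$ as required.

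The only subtle point in the argument is that the grid spacing $h_n$ must be chosen fine enough so that Corollary \ref{Korollar} is available (which forces $h_n\ll (\log n/n)^{1/2}$, hence $\alpha>1/2$) yet coarse enough that the union bound over $\lceil u/h_n\rceil$ grid points in Lemma \ref{An_Lemma3} still decays at rate $n^{-1/K}$ (which forces $\alpha<\tau^2/2-1/K$). The compatibility of these two constraints is precisely where the hypothesis $\kappa>\sqrt{1+2/K}$ is used.
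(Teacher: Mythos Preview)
Your proof is correct and follows essentially the same approach as the paper: reduce to grid control via Corollary \ref{Korollar}, then bound $\PP(A_c^c)$ by Markov and the grid deviations by Lemma \ref{An_Lemma3}. The only differences are in the tuning: the paper takes $\tau=\sqrt{1+2/K}$ exactly (the boundary value) together with a mesh satisfying $n^{-1/2}<h=o((\log n/n)^{1/2})$, whereas you take $\tau$ strictly above $\sqrt{1+2/K}$ and $h_n=n^{-\alpha}$ with $\alpha\in(1/2,\tau^2/2-1/K)$; your version is a bit more explicit about the constant and the treatment of $n<n_0$.
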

\begin{proof} Fix $c$ and $\kappa>\tau$, assume that $h=o((n/\log n)^{-1/2} )>n^{-1/2}$. By Corollary \ref{Korollar}, there exists, for any $\tau < \kappa$, some positive integer $n_{0}$, depending on $\E[|Y|]$ and $\tau$ such that for arbitrary $n \geq n_{0}$ we have $B_{c,h,\tau}(u)\subset B_{\kappa}(u)$. It follows that $\PP( B_{\kappa}(u)^{c})\leq \PP(B_{c,h,\tau}(u)^{c})$. The definition of $B_{ c,h,\tau}(u)$ and Lemma \ref{An_Lemma3} give, $\forall n\geq n_{0}$,
\begin{align*}
\PP( B_{ c,h,\tau}(u)^c  ) \leq   \PP(A_{c}^c)  +   2\lceil u /h\rceil 
 n^{-\tau^2/2}\leq   { {\frac{1}{c^2 n} \E[Y^2]   } }+ 2\lceil u /h\rceil n^{-\tau^2/2}.
\end{align*}
With the choice $\tau =\sqrt{1+\frac{2}{K}}$, the statement of the Lemma follows. 
\end{proof}

\subsection{Proof of Theorem \ref{Hauptsatz_Schranke}\label{sec prf1}}
  We have the decomposition $$\|\hat{f}_m-f\|^2=\|f_m-f\|^2+\|\hat{f}_m-f_m\|^2 = \|f_m-f\|^2+\frac{1}{2\pi}\int_{- m}^{ m} |\hat\phi_{X}(u)-\phi_{X}(u)|^2\d u.$$ 

The estimator  $\hat{\phi}_X$ can be rewritten as follows
\begin{align*}
\hat{\phi}_X(u) 
 =  e^{\frac{1}{K}  \hat{\psi}(u) } = e^{\frac{1}{K} \Re \hat{\psi}(u) } 
e^{\frac{1}{K} i \Im \hat{\psi}(u) }=|\hat{\phi}(u)|^{\frac{1}{K}}  e^{\frac{1}{K} i \Im \hat{\psi}(u) },
\end{align*} where $\Re$ and $\Im$ denote the real and imaginary part of a complex number. For nonnegative real numbers, the $K$-th root is unique so that $|\phi(u)|^{1/K}= |\phi_X(u)|$. It follows that 
\begin{align}
|\hat{\phi}_X (u) - \phi_X(u)|^2 
= &  \big| |\hat{\phi}(u)|^{\frac{1}{K}}e^{\frac{i}{K}\Im\hat{\psi}(u) }  - 
|\phi(u)|^{\frac{1}{K} }  e^{\frac{i}{K}\Im{\psi}(u)} \big|^2 \nonumber\\
\label{eq prf UB1} \leq &  2 \big|  |\hat{\phi}(u)|^{\frac{1}{K}} - 
|\phi(u)|^{\frac{1}{K} }  \big|^2 + 2|\phi(u)|^{\frac{2}{K} } \big|
e^{\frac{i}{K}\Im\hat{\psi}(u) }  -  e^{\frac{i}{K}\Im{\psi}(u)}  \big|^2. 
\end{align}

\

Consider the first term of \eqref{eq prf UB1}. Using that $x\rightarrow x^{1/K}$ is H\"older continuous for $x>0$ and the triangle inequality, we derive 
\begin{align*}
  \big|  |\hat{\phi}(u)|^{\frac{1}{K}} - 
|\phi(u)|^{\frac{1}{K} }  \big|^2
\leq &  \big|\hat{\phi}(u) - 
\phi(u)  \big|^{2/K}.
\end{align*} Integrating the former inequality in $u$, taking expectation and applying the Jensen inequality lead to
\begin{align}\label{eq prfthm1}
 \Ebig{\intm    \big|  |\hat{\phi}(u)|^{\frac{1}{K}} - 
|\phi(u)|^{\frac{1}{K} }  \big|^2   \d u  }   \leq &  \intm \big(\E[|\hat{\phi}(u) - 
\phi(u)|^{2}]\big)^{1/K}\d u\leq m n^{-\frac{1}{K}}.\end{align}

In the skewed case, we need to consider the second term of \eqref{eq prf UB1} that is nonzero.  We derive that 
\begin{align*}
 \big|
e^{\frac{i\Im\hat{\psi}(u)}{K} }  -  e^{\frac{i\Im{\psi}(u)}{K} } \big|^2
\leq & \frac{1}{K^2}  \big|\Im\hat{\psi}(u)  -  \Im{\psi}(u)\big|^2 
\leq \frac{1}{K^2}  \bigg|  \intu \frac{\hat{\phi}'(v)}{\hat{\phi}(v) }
- \frac{{\phi}'(v)}{{\phi}(v) } \d  v \bigg|^2.   %\label{eq prf UB31}
\end{align*}

Now,  for arbitrary $v\in \R$, 
\begin{align}
\frac{\dephi(v)}{\ephi(v)}  -\frac{\dphi(v)}{\phi(v)} = &\frac{\phi(v) \dephi(v)-\ephi(v) \dphi(v) }{\phi(v)\ephi(v)}  = \frac{ \phi(v)(\dephi(v) - \dphi(v) )  - \dphi(v) (\ephi(v) - \phi(v)        ) }{\phi(v)\ephi(v)} \nonumber \\
=&\dfrac{ (\dephi(v) - \dphi(v) )/\phi(v)  - (\dphi(v)/\phi(v)^2) (\ephi(v) - \phi(v)        ) } {\ephi(v)/\phi(v)} \nonumber  \\\label{eq:der}=& \dfrac{   \Big(- \frac{(\ephi(v)-\phi(v))}{\phi(v)}   \Big)' }{\Big(1- \frac{(\ephi(v)-\phi(v))}{\phi(v)}    \Big)   }.
\end{align}

 First, we consider the case where $m$ lies in $[0, u_{n}^{(\gamma,\epsilon)}]$, where $u_{n}^{(\gamma,\epsilon)}$ is defined in \eqref{eq uek}. Set $\kappa=(1+\eps)\gamma$, by Corollary \ref{Korollar} and by the definition of  the event  $B_{\kappa} (m)$ and of $u_{n}^{(\gamma,\epsilon)}$, we find that on $B_{\kappa}(m)$, 
\begin{equation}  \label{Bm}
\forall u\in [-m, m ]   :  |\hat{\phi} (u) -  \phi(u) | \leq \kappa \sqrt{\log n} n^{-\frac{1}{2} } \leq 1/(1+\eps)  |\phi(u) |. 
\end{equation}
Then, on the event $B_{\kappa}(m)$,  a  Neumann series expansion, along with formula \eqref{eq:der} and formula $\eqref{Bm}$ gives for $v\in[-m,m]$, 
\begin{align*}
  \frac{\dephi(v)}{\ephi(v)}  -\frac{\dphi(v)}{\phi(v)}  =- \sum_{\ell=0}^{\infty} \Big( \frac{\ephi(v)-\phi(v)}{\phi(v)}   \Big)' \Big( \frac{\ephi(v)-\phi(v)}{\phi(v)}    \Big)^{\ell}.
\end{align*}  
The following representation holds
\begin{align*}   \Big( \frac{\ephi(v)-\phi(v)}{\phi(v)}   \Big)' \Big( \frac{\ephi(v)-\phi(v)}{\phi(v)}    \Big)^{\ell} = \frac{1}{\ell +1} \Big[ \Big( \frac{\ephi(v)-\phi(v)}{\phi(v)}    \Big)^{\ell+1}\Big]'.
\end{align*}
Moreover, we have $\hat{\phi}(0)- \phi(0)=0$ and get 
\begin{align*}
 \mathds{1}_{B_{\kappa}(m)}\Big| \intu  \Big( \frac{\dephi(v)}{\ephi(v)}  &-\frac{\dphi(v)}{\phi(v)}   \Big) \d v  \Big|
\leq \sum_{\ell=0}^{\infty} \frac{1}{\ell+1}   \frac{|\hat{\phi}(u) - \phi(u) |^{\ell +1}  }{|\phi(u)|^{\ell+1}  } \\
 \leq  & \frac{|\hat{\phi}(u) - \phi(u) |}{|\phi(u)|  }  \sum_{\ell=0}^{\infty} \frac{(1+\eps)^{-\ell} }{\ell+1}  = (1+\eps) \log(1+1/\eps)\frac{|\hat{\phi}(u) - \phi(u) |}{|\phi(u)|  } .
\end{align*}
Gathering all the terms together, we have shown that for $m\in[0, u_{n}^{(\gamma,\epsilon)}]$, 
\begin{align}
 \Ebig{  \mathds{1}_{{B_{\kappa}(m)}}  & \intm   |\phi_X(u)|^2 \big|e^{\frac{i}{K} \Im  \hat{\psi}(u)}-
e^{\frac{i}{K}  \psi(u) }  \big|^2   \d u }\nonumber  \\
\leq  &     \frac{c(\eps) }{K^2} \intm    |\phi_X(u)|^2 \frac{\ebig{|\hat{\phi}(u) -\phi(u)|^2} }{|\phi(u)|^2}  \d u  
 \leq   \frac{ c(\eps)  }{ K^2} \intm  \frac{n^{-1}}{|\phi_X(u)|^{2(K-1)} }  \d u ,\label{eq prfthm3}
\end{align}
where $c(\eps):= (1+\eps) \log(1+1/\eps)$. 
On $B_{\kappa}(m)^{c}$, we use the majorant $|e^{i\Im\psi/K}-e^{i\Im \hat{\Psi}/K}|\leq 2$ and Lemma \ref{letztes_Hilfslemma}
\begin{align}
  \Ebig{  \mathds{1}_{{B_{\kappa}(m)^c}}   \int_{- m}^{ m}   |\phi_X(u)|^2 \big|e^{\frac{i}{K} \Im  \hat{\psi}(u)}-
e^{\frac{i}{K}  \psi(u) }  \big|^2   \d u }  \leq & 4 \PP( B_{\kappa}(m)^c ) \int_{- m}^{  m}   |\phi_X(u)|^{2}  \d u \nonumber \\
  \leq  & 4 \|\phi_X\|^2  m n^{-\frac{1}{K} }.\label{eq prfthm2} 
\end{align}

\

Secondly, we consider the case $m\geq u_{n}^{(\gamma,\epsilon)}$. The series of inequalities 
\begin{align}  
  |\hat{\phi}_X(u) -\phi_X(u) |  \leq & 2 |\phi_X(u)|  +||\hat{\phi}_X(u)| - |\phi_X(u)||  \nonumber\\
\label{Abs_3105}
= & 2 |\phi_X(u)|  +||\hat{\phi}(u)|^{\frac{1}{K}} - |\phi(u)|^{\frac{1}{K} }|  
\leq  2 |\phi_X(u)|  +|\hat{\phi}(u) -\phi(u)|^{\frac{1}{K} },
\end{align}
combined with the Jensen inequality, implies that   
\begin{align}
 \int_{ |u| \in [u_{n}^{(\gamma,\epsilon)}, m] }\limits \hspace*{-0.5cm} \Ebig{|\hat{\phi}_X(u) -\phi_X(u) |^2  }  \d u  \leq&  4 \hspace*{-0.5cm}\int_{  |u|\in [u_{n}^{(\gamma,\epsilon)}, m] }\limits \hspace*{-0.5cm} |\phi_X(u) |^2 \d u +  4 \hspace*{-0.5cm}\int_{  |u|\in [u_{n}^{(\gamma,\epsilon)}, m] }\limits \hspace*{-0.5cm} \Ebig{ |\hat{\phi}(u) - \phi(u) |^2 }^{\frac{1}{K} } \d u \nonumber \\
\leq &  4 \hspace*{-0.5cm} \int_{|u|\in [u_{n}^{(\gamma,\epsilon)}, m] } \limits  \hspace*{-0.5cm}|\phi_X(u) |^2 \d u   +   8 n^{-\frac{1}{K } } m.  \label{eq jens}
\end{align}
Gathering equations \eqref{eq prfthm1}, \eqref{eq prfthm3}, \eqref{eq prfthm2} and \eqref{eq jens} and defining $C_{1}=c(\eps)$ and $C_{2}=(9+4\|\phi_{X}\|^{2})$, completes the proof of of Theorem \ref{Hauptsatz_Schranke}. 
\hfill $\Box$

\subsection{Rates of convergence: Proof of Proposition \ref{prop}.} Proposition \ref{prop} is a direct consequence of Theorem \ref{Hauptsatz_Schranke}.  To establish the result we bound independently each term of the right hand side of \eqref{equation_upper}.
\begin{itemize}
\item[(i)]    Fix  $\beta, b, C,C',\gamma$  and $\eps$.  There exist a constant $C_1$ which depends  on $\gamma,\eps$  and decreases with respect to  $C$  and   a constant $C_2$  depending on $a$ and decreasing with respect to  $C'$  such that  for any  $f\in \mathcal{F}(\beta, b, C, C', C_X, C_f)$ we have
\begin{align*}
u_{n}^{(\gamma, \eps)}  \geq C_1(n/\log n)^{\frac{1}{2b K}  }
 \quad \text{and}   \quad   u_n^{(a)}   \leq C_2  n^{\frac{1}{2\beta K}  }.
\end{align*}
Using the fact that $m^*\leq u_n^{(a)}$  and the definition of $u_n^{(a)}$,  we can estimate the second term in \eqref{equation_upper} as follows
\begin{align*}
\frac{1}{n}\int_{-m^*}^{m^*}  \frac{1}{|\phi_X(u)|^{2(K-1)} }  \d u
\leq   & \frac{ 2u_n^{(a)} }{n}  |\phi_X(u_n^{(a)})|^{-2(K-1) }   
\leq  2 C_2 n^{\frac{1}{2\beta K} -1 }  a n^{\frac{2(K-1)     }{2K    }   }  \\
=& 2a C_2 n^{-\frac{2\beta -1}{2\beta K} }  \leq 2a C_2 n^{-\frac{2\beta -1}{2b K} } .
\end{align*}
The first and fourth term appearing in Theorem 1 can be controlled using that 
\begin{align*}
\int_{ \{|u| \geq u_n^{(\gamma, \eps)}  \} }    |\phi_X(u)|^2 \d u
\leq  \frac{2 C'}{2\beta -1}  \big( u_n^{(\gamma, \eps)} \big)^{-(2\beta -1)}
\leq\frac{2 C'}{2\beta -1} \Big( C_1 \frac{n}{\log n}  \Big)^{-\frac{2\beta -1}{2b K}}.
\end{align*}
Finally, the third term in \eqref{equation_upper} is bounded by
\begin{align*}
n^{-\frac{1}{K}  } m^*   \leq  C_2  n^{-\frac{2\beta - 1}{2\beta K}   } 
\leq C_2 n^{-\frac{2\beta -1}{2bK}  }. 
\end{align*}
We have thus shown that for a positive constant $\mathcal{C}$n which depends on the choice of $\kappa$ and $\eps$, which increases with respect to $C$, $C_X$ and $C_f$ and which decreases with respect to 
$\beta$ and $C'$,  we have that
\[
\E[\| f- \hat{f}_{m^*}  \|^2 ]  \leq\mathcal{C}  n^{-\frac{2\beta -1}{2bK}  }. 
\]
\item[(ii)] Fix $\beta, \rho, b, c, C,C',\gamma$ and $\eps$. Tere exists a constant $C_1$ depending on  $\gamma,\eps$   and decreasing with $C, c, b$
and a constant $C_2$ depending on  $a$  decreasing with respect to $C'$   such that  for any $f\in \mathcal{H}(\beta, \rho, b, c, C, C', C_X, C_f)$, we have
\begin{align*}
u_n^{(\gamma, \eps)}   \geq C_1 (\log n)^{\frac{1}{\rho} }
\quad \text{and}   \quad     u_n^{(a)}   \leq C_2 n^{\frac{1}{2\beta K}  }.
\end{align*}
Again, we may use the definition of $m^*$ and the estimate for $u_n^{(a)}$ to derive that 
\begin{align*}
\frac{1}{n}\int_{-m^*}^{m^*}   \frac{1}{|\phi_X(u)|^{2(K-1)}  }  \d u
\leq  2 a C_2  n^{-\frac{2\beta -1}{2\beta K}  }  =o \big((\log n)^{-\frac{2\beta - 1}{\rho}  }\big).
\end{align*}
Moreover, we have the upper bounds
\begin{align*}
\int_{ \{|u| \geq u_n^{(\gamma, \eps)}  \} }    |\phi_X(u)|^2 \d u
\leq    \frac{ 2  C'}{2\beta -1 } (C_1 \log n)^{-\frac{2\beta -1}{\rho} }
\end{align*}
and
\begin{align*}
m^*  n^{-\frac{1}{K}  }  \leq    C_2  n^{-\frac{2\beta -1}{2\beta K}  } =o\big( (\log n)^{-\frac{2\beta -1}{\rho}  }\big).  
\end{align*}
Putting the above together, we find that 
\[
\E[\| f-  f_{m^*} \|^2 ]  \leq \mathcal{C}  (\log n)^{-\frac{2\beta-1}{\rho} },
\]
for some $\mathcal{C}\geq 0$ which is  increasing with respect to $C, c, b, C_X, C_f$  and decreasing with respect to $\beta$ and $C'$. 
\item[(iii)] Fix $\rho, c, C,C',\gamma$ and $\eps$. There exists a constant $C_1$ depending on $\gamma$ and $\eps$  which is decreasing with respect to  $c$ and $C$  and a constant $C_2$ depending on $\gamma$ and $\eps$ which is decreasing with respect to $c$ and $C'$ 
 such that for any $f\in \mathcal{G}(\rho, c, C, C', C_X, C_f)$, we have
\begin{align*}
u_{n}^{(\gamma, \eps)}  \geq C_1 (\log n)^{\frac{1}{\rho} }  \quad \text{and}  \quad
u_n^{(a)} \leq C_2 (\log n)^{\frac{1}{\rho} }.
\end{align*}
In analogy with (i) and (ii), we have
\begin{align*}
\frac{1}{n}\int_{-m^*}^{m^*}   \frac{1}{|\phi_X(u)|^{2(K-1)}  }  \d u
\leq 2 a C_2 n^{-\frac{1}{K} } (\log n)^{\frac{1}{\rho} }.
\end{align*}
On the other hand, for some constant $d$ depending on $c$, $C$ and $C'$, we derive from \eqref{eq G} that
\begin{align*}
\int_{ \{|u| \geq u_n^{(\gamma, \eps)}  \} }  |\phi_X(u)|^2 \d u \leq & \frac{2C'}{c}   \exp( -2 c \big( u_n^{(\gamma, \eps)}\big)^{\rho}  )  \leq   d \big(u_n^{(\gamma, \eps)}\big)^{1-\rho} \big|\phi_X\big( u_n^{(\gamma,\eps)}
\big)\big|^2  \\
\leq &  d C_2 (\log n)^{ \frac{1}{\rho} - 1}  \Big( \frac{n}{\log n}  \Big)^{- \frac{1}{K} }
\leq  d C_2 n^{-\frac{1}{K}  } (\log n)^{\frac{1}{\rho} }.
\end{align*}
We also get that
\begin{align*}
n^{-\frac{1}{K} } m^*  \leq  C_2  n^{-\frac{1}{K} } (\log n)^{\frac{1}{\rho} }.
\end{align*}
This  implies for a constant $\mathcal{C}$ which depends on $C,C'$ and $c$, 
\[
\E[\| f-  f_{m^*} \|^2 ]  \leq \mathcal{C}  (\log n)^{-\frac{2\beta-1}{\rho} },
\]
which completes the proof of Proposition \ref{prop}.   \hfill $\Box$
\end{itemize}
\textbf{Remark.} The dependence of $\mathcal{C}$ is not easy to express in a closed form.  However, $\mathcal{C}$ is continuous in the constants involved.

\subsection{Lower bound: Proof of Theorem \ref{prop_2}}
Following Theorem 2.7 in Tsybakov (2009), the lower bound is established by considering a decision problem between  an increasing number of competing  densities $f_0, \ldots, f_{{M_n}}$ contained in  $\mathcal{F}(\beta, b, C, C',C_X, C_f)$, for some $M_{n}\geq 1,$ such that the following two conditions are satisfied
\begin{itemize}
\item[(i)]   $\| f_{j}- f_{k}\|^2 \geq d n^{-\frac{2\beta - 1}{2bK} },\   \forall \, 0\leq j<k\leq M_n$    for some positive constant $d$,
\item[(ii)]  $\frac{1}{M_n} \sum_{j=1}^{M_n}\limits  \KL(\PP^{\otimes n}_{j,K}, \PP^{\otimes n}_{0,K} )  \leq \alpha  \log M_n,\  \forall 0\leq j \leq M_n$ and for some $0<\alpha<1/8$,
\end{itemize} where
$\PP_{j,K}$ denotes the probability measure corresponding to the density $f_{j}^{\ast K}$  and 
$\KL$ is the Kullback-Leibler divergence.  In the sequel, when there is no risk of confusion,  $\mathcal{F}$ stands for the class $\mathcal{F}(\beta, b, C, C', C_X, C_f)$. 

We start by constructing a density contained in $\mathcal{F}$, with characteristic function $\varphi$, such that
\begin{align} \label{auf_und_ab}
\liminf_{|u|\to  \infty}  |\phi(u)| |u|^{b} < \infty  \quad \text{and}  \quad 
\limsup_{|u|\to \infty}  |\phi(u)| |u|^{\beta }>0. 
\end{align}

We start by considering the density 
\begin{align*}
g_0(x)=\dfrac{3 \sin^4(x)}{2 \pi x^4}.
\end{align*}
The  characteristic function $\phi_{g,0}$ of $g_0$ is, up to a multiplicative  factor 3/2,  the  $4$-fold auto-convolution of the distribution $\mathcal{U}([-1,1])$ and hence supported on $[-4,4]$. 

To ensure the second part of  \eqref{auf_und_ab}, we define for some positive constant $c$, 
\begin{align*}
g(x) = g_0(x) + c \sum_{k=1}^{\infty} (1+(8k))^{-\beta} \cos (12kx)   g_0(x).
\end{align*}
Using the boundedness of the cosine-function, along with $\beta>1$ and an appropriate choice of $c$, $g$ is nonnegative. Moreover, since $g$ integrates to 1, $g$ is a density.  The multiplication with $\cos( 12 kx)$ corresponds to a left and right shift in the Fourier domain, so the characteristic function of $g$ is 
\begin{align}
\phi_g(u)= \phi_{g,0}(u)+\frac{c}{2}\sum_{k=1}^{\infty}  (1+(8k))^{-\beta}  ( \phi_{g,0}(u+12k)  + \phi_{g,0}(u-12k)  )  \label{US_1}.
\end{align}

For $g$ constructed as above, we can a priori not guarantee that \eqref{Konst_Schranke} is valid. However, this can be ensured with an  appropriate rescaling. 

\

So far, $g$ is not contained in the class $\mathcal{F}$, since the absolute value of  $\phi_{g}$ is not bounded from below, it does even have zeros.  To ensure the lower bound on the characteristic function under consideration, we proceed as follows.  Let $h$ be a symmetric  density which decays, in the time domain, as $x^{-4}$ and  whose characteristic function decays as  $|u|^{-b}$. For example, $h$ can be selected as the mixture of a density of the form $c'( 1+x^2)^{-2}$ and of 
a symmetric bilateral $\Gamma(b/2, \lambda)$ density.

Set
\[
f_0(x) = c_1 g(x)  + c_2 h(x) .  
\]
For  a positive constants $c_1$  to be chosen and $c_2=1-c_1$. 
Finally, by construction  $f_0$ belongs to the class  $\mathcal{F}$. 

\

A collection of competing  densities $f_0, \ldots, f_{{M_n}}$ is  obtained by considering  perturbed versions of the characteristic function $\phi_0$ of  $f_0$. Let  $\psi$ be  a complex-valued function satisfying the following properties:
\begin{itemize}
\item $\psi$ is supported on $[-8,8]$.
\item  $\psi(u)= 1-e^{i2\pi/K},\ \forall u\in[-4,4].$  
\item $\psi$ has three continuous (complex) derivatives.
\item $|1-\psi(u)| \geq 1/2\    \forall u\in [-8,8]$. 
\item $ |\psi(u)| \leq 2\    \forall u\in [-8,8]$. 
\end{itemize}
For $n\in \N$,  a family of perturbations is defined as follows.  Let  $L_n= 12k_n$,  with some $k_n\in \N$  to be specified.  Let $m_n$ be a positive integer to be chosen.  For $1\leq m \leq m_n$ define 
\[
\psi_m(u) =  \psi( u- L_n -12 m) +\overline{\psi} (u+L_n+12 m).
\]
Given any subset  $\mathcal{M} \subseteq \{1, \ldots, m_n\}$,  a perturbed version of $\phi_0$ is defined by  
\[
\phi_{\mathcal{M}}(u):= \phi_{0}(u) - \sum_{m\in \mathcal{M}} \psi_m(u)\phi_0(u).
\]
In the sequel, define the sets
\[
I_{n,m}: =[-L_n - 12  m - 4, -L_n -12 m+4]  \cup  [L_n + 12  m - 4, L_n + 12 m+4],
\] 
and 
\[
J_{n,m}:=\big( [-L_n - 12  m - 8, -L_n -12 m+8]  \cup  [L_n + 12  m - 8, L_n + 12 m+8] \big) \setminus 
I_{n,m},
\]which are disjoints for distinct values of $m$. Note that the following holds, for all $m\in \mathcal{M}$.  The function $\psi_m$ is compactly supported on the interval $I_{n,m}\cup J_{n,m}$ and if $u\in I_{n,m}$ we have $\psi_{m}(u)=(1-e^{i2\pi/K})$. 

Now, if we consider  $ \phi_{g}(u) -\sum_{m\in \mathcal{M} }  \psi_m(u) \phi_{g}(u)  $, by construction, this function is equal to the function $\phi_{g}e^{i2\pi/K}$ on any interval of the form $I_{n,m}, \ m\in \mathcal{M}$ and with $\phi_{g}$ elsewhere since on the interval $J_{n,m}$, the function $\phi_{g}$ is zero.  Using the same arguments as for formula \eqref{US_1}, we find that the inverse Fourier transform of this quantity is non negative and hence is a density.

Next, consider $\phi_h(u)- \sum_{m\in \mathcal{M} }  \psi_m(u) \phi_{h}(u)  $.  By construction of $\psi$, the inverse Fourier transform of $\psi_m$ decays faster than $h$ and the inverse Fourier transform of $\psi_m \phi_h$ is bounded by $c L_m^{-b} |\Fourier^{-1}\psi_m |$. It follows that the inverse Fourier transform of   $\phi_h(u)- \sum_{m\in \mathcal{M} }  \psi_m(u) \phi_{h}(u)  $ is non-negative, provided that $m_n\leq L_m/c$. Finally,  by the definition of $\psi$,  $\phi_{\mathcal{M}}$ has the same asymptotic behavior as $\phi_0$ for any subset $\mathcal{M}$. Finally, $\phi_{\mathcal{M} }$  belongs to $\mathcal{F}$.

\

We are now ready to check that (i) holds  by bounding from below the $L_{2}$-distance between the competing densities.   By definition of the $\psi_m$, given two sets of indices $\mathcal{M}_1 , \mathcal{M}_2  \subseteq \{m_1, \ldots, m_n\}$,  we have
\begin{align*}
\big|\phi_{\mathcal{M}_1 }(u)|_{{I_{n,m}}}- \phi_{\mathcal{M}_2}(u)|_{{I_{n,m}}} \big|  =&\mathds{1}_{ \mathcal{M}_1\triangle\mathcal{M}_2 }(m) |1-e^{i2\pi/K}|\big|\phi_0(u)|_{{I_{n,m}}} \big|\\
 = &  \mathds{1}_{ \mathcal{M}_1\triangle\mathcal{M}_2 }(m)  2|\sin(\pi/K)| \big|\phi_0(u)|_{I_{n,m}}\big|,
\end{align*} 
with $\triangle$ denoting the symmetric difference.  Using this, along with formula \eqref{US_1}, we derive that there exists a positive constant $C$ such that
\begin{align}\label{L-Abstand}
 \|\phi_{\mathcal{M}_1 } -   \phi_{\mathcal{M}_2 }  \|^2  \geq C |\mathcal{M}_1\triangle\mathcal{M}_2|   (1+|L_m+12m_n|)^{-2\beta} \int_{-4}^{4} |\phi_{g,0}(u)|^2 \d u.
\end{align}  
The Varshamov-Gilbert bound (see, e.g. Lemma 2.9 in Tsybakov (2009)) guarantees that for  $m_n\geq 8$, there exist  a family 
$\{\mathcal{M}_{n,k}: 0\leq k\leq  M_n\}$ of subsets of $\{1,\ldots, m_n\}$ such that
\begin{itemize}
\item  $\mathcal{M}_{n,0}=\emptyset $,  
\item  $M_n\geq 2^{m_n/8}$ and
\item $|\mathcal{M}_{n,k} \triangle \mathcal{M}_{n,j} |\geq  m_n/8, \   \forall\,  0\leq j < k \leq M_n$. 
\end{itemize}
Combining this with formula \eqref{L-Abstand}, we find that for $n\in \N$ large enough, there exist $M_n=2^{m_n/8}$ densities  $f_0, \ldots, f_{{M_n}}$  belonging to $\mathcal{F}$ and for which 
\[
\|f_{j}- f_{k} \|^2 \geq C m_n  m_n^{-2\beta},\  j\not = k 
\]
holds, provided that $L_n\asymp m_n$. 

\

Let us now bound the Kullback-Leibler divergence to establish (ii). Using that the Kullback-Leibler divergence is bounded by the $\chi^2$-distance, as well as the fact that 
\[
\KL(\PP^{\otimes n}, \QQ^{\otimes n} ) =n \KL(\PP, \QQ), 
\]
it is enough to consider the $\chi^2$-distance between the competing densities $f_{j}^{\ast K}, \ j=0,\ldots, M_n$,
\begin{align*}
\chi^2(\PP_{j,K}, \PP_{0,K} )=\int_{-\infty}^{\infty}  \frac{ (f_j^{\ast K} (x)- f_0^{\ast K}(x) )^2}{f_0^{\ast K}(x) }   \d x. 
\end{align*}
Since $f_{0}^{\ast K}$ and $f_{0}$ decay at the same rate $x^{-4}$, we have a some positive constant $C$,  
\begin{align*}
\chi^2( \PP_{j,K}, \PP_{0,K} )
\leq & C \int_{-\infty}^{\infty} (1+x^4)(f_j^{\ast K} (x)- f_0^{\ast K}(x) )^2  \d x  \\
= & C \int_{-\infty}^{\infty} (f_j^{\ast K} (x)- f_0^{\ast K}(x) )^2  \d x   + C \int_{-\infty}^{\infty} (x^2f_j^{\ast K} (x)- x^2 f_0^{\ast K}(x) )^2  \d x . 
\end{align*}
The multiplication with $x^k$ corresponds, up to the factor $i^{\m k}$, to taking the $k$-th derivative in the Fourier domain. The Plancherel formula gives\begin{align*}
\chi^2( \PP_{j,K}, \PP_{0,K})\leq &  \frac{C}{2\pi} \int_{-\infty}^{\infty}|\phi_j^{K}(u) - \phi_0^{K}(u) |^2 \d u   + \frac{C}{2\pi}\int_{-\infty}^{\infty}  |(\phi^{K}_j)''(u)  - (\phi_0^{K})''(u)|^2 \d u.  
\end{align*}
By construction,  on the intervals  $I_{n,k},\ k\in \mathcal{M}_n$,  $\phi_0$ is strictly positive, we  have
\[
\phi_j(u) = \phi_0(u) e^{i2\pi/K}=|\phi_0(u)| e^{i2\pi/K}.
\]
Consequently,  $\phi_j^{K}$ and $\phi_0^{K}$ are equal on $I_{n,k}$. For $u\in J_{n,m}, \ m\in \mathcal{M}_n$, 
\[
(\phi_j^{K}(u) - \phi_0^{K}(u)  ) =   \phi_0(x)^{K}  (1-(1-\psi_m)^{K}(u) ).   
\]
Now, on the intervals  $J_{n,m}$,  $\phi_0$ agrees with  $\phi_{h}$.   As $h$  is the mixture of a density in $C(1+x^2/c^2)^{-2}$ and a bilateral gamma density, we can derive  from the analysis given in K\"uchler and Tappe (2008) that  $|(\phi_{0}^K)^{(\ell)}(u)|\lesssim |u|^{-Kb-\ell},\ \ell=0,1,2$.  Along with the boundedness of the derivatives of $\psi(u)$, this implies  that for $u\in J_{n,m}$,  
\begin{align*}
|\phi^K_j (u) - \phi_0^K|+ |(\phi^{K}_j)''(u)  - (\phi_0^{K})''(u)|^2 \lesssim |u|^{-2bK}.
\end{align*}
Finally, $\phi_j^{K}=\phi_0^K$ on the complement of the set $\cup_{m\in \mathcal{M}_n}
(I_{n,m} \cup{J}_{n,m} )$. 
It follows that 
\[
\chi^2(\PP_{j,K}, \PP_{0,K} ) \lesssim m_n L_n^{-2 bK} ,
\]
and
\begin{align*}
\frac{1}{M_n} \sum_{k=1}^{M_n} \KL(\PP_{j,K}^{\otimes n}, \PP_{0,K}^{\otimes n} ) \lesssim  n m_n L_n^{-2bK}\leq (1/8 \log M_n )  n L_n^{- 2bK }. 
\end{align*}
The choice  $L_n\asymp m_n \asymp n^{\frac{1}{2bK} }$ and Theorem 2.7 in Tsybakov (2009) imply the result. 
 \hfill $\Box$
\subsection{Adaptive estimation: Proof of Theorem \ref{Hauptsatz_ad}.}
In the sequel, we write  $\hat{m}$ instead of  $\hat{m}_{\eta}$. Recall that, for some $\eta>1$, $$\hat{m}=\min\big\{\min\{u>0: |\hat{\phi}(u) | = (nK)^{-\frac{1}{2} }  +\sqrt{\frac{\eta }{K} \log n}  
n^{-\frac{1}{2} } \}  ,  n^{\frac{1}{K}  }  \big\}.$$ 

 In what follows, $\mathcal{E}$ stands for one of the function classes, namely,  $\mathcal{F}(\beta, b, C,C',C_X, C_f)$, $\mathcal{H}(\beta, \rho, b, c, C,C', C_X, C_f) $ or $\mathcal{G}(\rho, c, C, C',C_X, C_f)$.  Depending on the class considered, 
 let $r_n=~(n/\log n)^{\frac{2\beta -1}{2bK}}$, $r_n= (\log n)^{\frac{2\beta -1}{\rho} }$ or 
 $r_n=  (\log n)^{\frac{1}{\rho} } n^{-\frac{1}{K} }$.

Let $f$ be any density in $\mathcal{E}$ and define 
\[
m_0:= \min \{u>0: |{\phi}(u)|  = (2 \sqrt{\eta /K } +\kappa ) (\log(n)/ n)^{1/2}   \}
\]
and 
\[
  m_1:= \min \{u>0: |{\phi}(u)|  = (nK)^{-\frac{1}{2} }   \}.
\]

Firstly, using the definition of $m_0$, along with Lemma \ref{letztes_Hilfslemma} and the triangle inequality,  we find that 
\begin{align}
\label{eq m0}
&\PP\big(   \hat{m}< m_0  \big)   
\leq  \PP\big(    \exists u\in [0, m_0] : |\hat{\phi}(u) - \phi(u)| \geq \kappa \sqrt{\log n} n^{-\frac{1}{2} }  \big)  \leq  C m_0 n^{-\frac{1}{K}  },\end{align}
with some constant $C$ depending on $\E[X^{2}]< \overline{C}_X$. 

Secondly, by the Hoeffding inequality, the triangle inequality and by the continuity of the empirical characteristic function, 
\begin{align}
 \PP\big( \hat{m}>m_1\big)  \leq &\PP \big( |\hat{\phi}(m_1)|  > (nK)^{-\frac{1}{2} }
+\sqrt{\eta\log(n)/K} n^{-\frac{1}{2} }   \big)\nonumber \\
\leq  &  \PP\big( |\phi(m_1) - \hat{\phi}(m_1) |
> \sqrt{\eta\log(n) /K} n^{-\frac{1}{2} }   \big)  \leq n^{-\frac{\eta}{K} }. \label{eq m1}
\end{align}

The proof of Theorem \ref{Hauptsatz_Schranke} implies that 
\begin{align} 
  \E[&  \| f- \hat{f}_ {\hat{m}} \|^2  \mathds{1}_{\{\hat m\in[m_{0},m_{1}] \} } ]  \nonumber\\
 \label{ad_11} \leq &  \| f- f_{{m_0}} \|^2   +
\frac{C_1}{n K^2} \int_{-  {m_1}  }^{ m_1 }\limits 
\frac{1}{|\phi_X(u)|^{2(K-1)}  }  \d u +  C_2 m_1 n^{-\frac{1}{K} }  +  \frac{2}{\pi} \int_{ [u_{n}^{(\kappa,\epsilon)}, m_1] }\limits |\phi_X(u) |^2 \d u,
\end{align}
 with a constant $C_1$ depending on the definition of $\eps$ and with $C_2 $ depending on $\|f\|<\overline{C}_f$ and on $\E[X^{2} ] <\overline{C}_X$. 
We can now use the same arguments as in the proof of Proposition 2 to derive from \eqref{ad_11} that 
\begin{align} \label{Orakelgleichung_1}
\hspace{-0.3cm} \E[\|f -& \hat{f}_{\hat{m}}  \|^2 \mathds{1}_{ \{\hat m\in[m_{0},m_{1}] \} } ]    \leq \mathcal{C}_1  r_n
\end{align}
for a positive constant $\mathcal{C}_1$ depending on $I$
and on the choice of the parameters. 

Consider now the exceptional set $ {\hat{m}\notin[m_{0},m_{1}] }$. Applying the Plancherel formula  and arguing along the same lines as in the proof of \mbox{Theorem \ref{Hauptsatz_Schranke}}, we derive that 
\begin{align} 
 & \E[  \| f- \hat{f}_{\hat{m } }\|^2 \mathds{1}_{ \{\hat{m}<m_0 \} }]  \leq   \PP(\hat{m}<m_0) \| f\|^2  +  \frac{1}{2\pi}\int_{ - m_0}^{m_0}    \E[ | \hat{\phi}_X (u) - \phi_X(u)|^2  ] \d u\nonumber\\
\leq  &  \PP(\hat{m}<m_0) \| f\|^2   +  \frac{C_1}{nK^2 } \int_{- m_0 }^{ m_0 }   \frac{1}{|\phi_X(u)|^2 } \d u  + C_2 n^{-\frac{1}{K} } m_0 +\frac{2}{\pi} \hspace*{-0.5cm} \int_{ [ u_{n}^{(\gamma,\epsilon)}, m_0] }\limits  \hspace*{-0.5cm} |\phi_X(u) |^2 \d u .  \label{Ausnahmemenge_1}
\end{align}
Finally,  the fact that $\hat{m}\leq n^{\frac{1}{K}}$, along with the Plancherel formula and  formula \eqref{Abs_3105}  leads to
\begin{align*} 
 \E[  \| f&- \hat{f}_{\hat{m } }\|^2  \mathds{1}_{ \{ \hat{m}>m_1 \} }  ]\\
 \leq  &     \| f- f_{{m_1} }\|^2   +4  \PP(\hat{m}>m_1)   \int_{-  n^{\frac{1}{K}} }^{ n^{\frac{1}{K}} } | \phi_X(u)|^2 \d u 
+  \int_{-n^{\frac{1}{K}}}^{n^{\frac{1}{K}} } \E[\mathds{1}_{ \{ \hat{m}>m_1 \} }  |\hat{\phi}(u)- \phi(u)|^{\frac{1}{K} } ] \d u.
\end{align*}
It follows, using successively the H\"older, the Jensen and the Rosenthal inequalities, that for some constant $C$ and some $C'$ depending on $\eta$,\begin{align}
\label{Ausnahmemenge_2}\hspace{-0.3cm}\E[  \| f- \hat{f}_{\hat{m } }\|^2  \mathds{1}_{ \{ \hat{m}>m_1 \} }  ] \leq  \|f- f_{ {m_1 }}\|^2 
+ Cn^{-\frac{1}{K} }  \| \phi_X\|^2   + C' \PP(  \hat{m} >m_1)^{  \frac{1}{\eta}    }\int_{- n^{\frac{1}{K}} }^{ n^{\frac{1}{K}}}   n^{-\frac{1}{K} }\d u.
\end{align}
Combining \eqref{Ausnahmemenge_1} and  \eqref{Ausnahmemenge_2} together with \eqref{eq m0} and \eqref{eq m1} and plugging in $m_{0}$ and $m_{1}$, we have shown that for  a positive constant $\mathcal{C}_2$ depending again on $I$ and  the choice of $\eps,\delta$ and $\eta$, 
\begin{align} \label{Orakelgleichung_2}
\hspace{-0.3cm} \E[\|f -& \hat{f}_{\hat{m}}  \|^2 \mathds{1}_{\hat m\notin[m_{0},m_{1}] } ]    \leq \mathcal{C}_2  r_n.
\end{align}
Putting  \eqref{Orakelgleichung_1} and \eqref{Orakelgleichung_2} together, we have shown that  
there exists a constant  $\mathcal{C}$ depen\-ding on the choice of $\eta,\delta $ and $\eps$ and on  $I$, such that for any $f\in \mathcal{E}$,
\begin{align*}
\frac{ \E[ \| f- \hat{f}_{\hat{m}} \|^2 ] }{  r_n } \leq \mathcal{C}.
\end{align*}
Taking the suprema   gives the statement of the theorem.   \hfill $\Box$

\end{document}